\crefname{equation}{}{}
\DeclareSymbolFont{symbolsC}{U}{pxsyc}{m}{n}
\DeclareMathSymbol{\medcircle}{\mathbin}{symbolsC}{7}
\crefname{algocf}{Algorithm}{Algorithms}
\crefname{equation}{}{} 
\colorlet{refkey}{orange!20}
\colorlet{labelkey}{blue!30}
\crefname{algocf}{Algorithm}{Algorithms}
\numberwithin{equation}{section}
\newtheorem{theorem}{Theorem}[section]
\newtheorem{proposition}[theorem]{Proposition}
\newtheorem{lemma}[theorem]{Lemma}
\crefname{claim}{Claim}{Claims}
\newtheorem{conjecture}[theorem]{Conjecture}
\newtheorem*{question*}{Question}
\theoremstyle{definition}
\newtheorem{definition}[theorem]{Definition}
\newtheorem*{definition*}{Definition}
\theoremstyle{remark}
\newtheorem*{remark}{Remark}
\newcommand{\norm}[1]{\bigg\lVert#1\bigg\rVert}
\newcommand{\mb}{\mathbb}
\newcommand{\mbm}{\mathbbm}
\newcommand{\mc}{\mathcal}
\newcommand{\mr}{\mathrm}
\newcommand{\on}{\operatorname}
\newcommand{\eps}{\varepsilon}
\let\originalleft\left
\let\originalright\right
\renewcommand{\left}{\mathopen{}\mathclose\bgroup\originalleft}
\renewcommand{\right}{\aftergroup\egroup\originalright}
\newcommand{\ignore}[1]{}
\title{The random graph process is globally synchronizing}
\author[A1]{Vishesh Jain}
\address{Department of Mathematics, Statistics, and Computer Science, University of Illinois Chicago, Chicago, IL, 60607 USA}
\email{visheshj@uic.edu}
\author[A2]{Clayton Mizgerd}
\address{Department of Mathematics, Statistics, and Computer Science, University of Illinois Chicago, Chicago, IL, 60607 USA}
\email{cmizge2@uic.edu}
\author[A3]{Mehtaab Sawhney}
\address{Department of Mathematics, Columbia University, New York, NY 10027}
\email{m.sawhney@columbia.edu}
\newcommand{\paren}[1]{\left(#1\right)}
\begin{document}

\begin{abstract}
The homogeneous Kuramoto model on a graph $G = (V,E)$ is a network of $|V|$ identical oscillators, one at each vertex, where every oscillator is coupled bidirectionally (with unit strength) to its neighbors in the graph. A graph $G$ is said to be globally synchronizing if, for almost every initial condition, the homogeneous Kuramoto model converges to the all-in-phase synchronous state. Confirming a conjecture of Abdalla, Bandeira, Kassabov, Souza, Strogatz, and Townsend, we show that with high probability, the random graph process becomes globally synchronizing as soon as it is connected. This is best possible, since connectivity is a necessary condition for global synchronization. 
\end{abstract}

\maketitle

\section{Introduction}
\paragraph{\bf The Kuramoto model and global synchronization}
The \emph{Kuramoto model}, first proposed in the physics literature by Yoshiki Kuramoto \cite{kuramoto1975self}, is a well-studied mathematical model for the behavior of multiple coupled oscillators. Formally, given a graph $G = (V,E)$ with adjacency matrix $A \in \mb{R}^{V\times V}$, we associate to each vertex $v \in V$ an oscillator with intrinsic frequency $\omega_{v}$ and denote the $\mb{S}^1$-valued phase of the oscillator at time $t$ by $\theta_v(t)$. The time evolution of the phases of the oscillators is governed by the following system of differential equations:
\[\frac{d\theta_v}{dt} = \omega_{v} - \sum_{u \in V}A_{u,v}\sin(\theta_{v} - \theta_{u}), \qquad \forall v \in V,\]
In this work, we will be concerned only with the \emph{homogeneous} case where all oscillators have the same intrinsic frequency, i.e.~$\omega_v = \omega$ for all $v \in V$. In the homogeneous case, by working in the rotating frame $\theta(t) \leftarrow \theta(t) - \omega(t)$, we can recast the preceding system of differential equations as the gradient flow
\[\frac{d\theta}{dt} = -\nabla \mc{E}_G(\theta),\]
where the energy function $\mc{E}_G : (\mb{S}^1)^{V} \to \mb{R}$ is given by
\[\mc{E}_G(\theta) = \frac{1}{2}\sum_{u,v \in V}A_{u,v}(1-\cos(\theta_u - \theta_v)).\]
The energy function is always non-negative and is minimized when all oscillators have the same phase, i.e.~$\theta_u = \theta_v$ for all $u,v \in V$. Such states are called \emph{fully synchronized states}. 

We say that a graph $G = (V,E)$ is \emph{globally synchronizing} if the homogeneous Kuramoto model on $G$, initialized at a uniformly random state $\theta \in (\mb{S}^1)^{V}$, converges to a fully synchronized state with probability one (over the randomness of the initial state). It is well-known (see,~e.g.~\cite{abdalla2022expander}) that in order to show that $G$ is globally synchronizing, it suffices to show that the energy function $\mc{E}_G$ has no spurious local minima,~i.e.~the only states with $\nabla \mc{E}_G (\theta) = 0$ and $\nabla^2 \mc{E}_G(\theta)$ positive semidefinite are the fully synchronized states. Note that a trivial necessary condition for a graph to be globally synchronizing is that it is connected; else having the oscillators only agree within connected components is also a global minimum of the energy function. 

\paragraph{\bf The Kuramoto model on random graphs} In recent years, much work has been devoted to studying the Kuramoto model on (sparse) random graphs, as a proxy for (sparse) realistic networks. Recall that for $p \in (0,1)$, the (binomial) Erd\H{o}s-R\'enyi model $G(n,p)$ is the distribution on graphs with $n$ vertices where each edge is present independently with probability $p$. Ling, Xu, and Bandeira \cite{ling2019landscape} showed that with high probability (i.e.~with probability tending to $1$ as $n$ tends to infinity), $G \sim G(n,p)$ is globally synchronizing if $p = \Omega(n^{-1/3}\log{n})$ and conjectured that this can be improved to $\Omega(n^{-1}\log{n})$, which would be best possible up to the implicit constant. Subsequently, Kassabov, Strogatz, and Townsend \cite{kassabov2021sufficiently} obtained global synchronization for $p = \Omega(n^{-1}(\log n)^2)$. Recently, Abdalla, Bandeira, Kassabov, Souza, Strogatz, and Townsend \cite{abdalla2022expander} proved the conjecture of Ling, Xu, and Bandeira in a strong form: for any fixed $\varepsilon > 0$ and $p = (1+\varepsilon)n^{-1}\log{n}$, $G \sim G(n,p)$ is globally synchronizing with high probability. The constant $1$ is best possible in the sense that for $p < n^{-1}\log{n}$, $G \sim G(n,p)$ is no longer  connected with high probability (and therefore, not globally synchronizing). In fact, the work in \cite{abdalla2022expander} obtains this result from the deterministic result that all graphs with ``sufficiently good expansion'' are globally synchronizing; we discuss this in more detail later in this introduction. For instance, the result of \cite{abdalla2022expander} also implies that for $d \geq 600$, a random $d$-regular graph is globally synchronizing with high probability.
\\

\paragraph{\bf Connectivity and synchrony} Since $p = n^{-1}\log{n}$ is the ``sharp threshold'' for the connectivity of an Erd\H{o}s-R\'enyi random graph, Abdalla, Bandeira, Kassabov, Souza, Strogatz, and Townsend \cite[Section~7]{abdalla2022expander} conjectured that for random graphs, with high probability, the only obstruction to synchrony is connectivity. Formally, consider the random graph process $\{G(n,m)\}_{m \geq 0}$ which is a graph-valued stochastic process initialized with $G(n,0)$ being the empty graph on $n$ vertices and where $G(n,m+1)$ is obtained from $G(n,m)$ by adding a uniformly chosen missing edge. Let $\tau$ denote the (random) first time when the random graph process is connected; formally 
\[\tau = \min\{m: G(n,m)\text{ is connected}\}.\] 
The authors of \cite{abdalla2022expander} proposed the following conjecture to ``completely seal the gap between connectivity and synchrony''. 
\begin{conjecture}[\cite{abdalla2022expander}]
With high probability, $G(n,m)$ is globally synchronizing for all $m \geq \tau$.     
\end{conjecture}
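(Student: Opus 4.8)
The plan is to reduce the conjecture to a new ``robust'' synchronization criterion for graphs that look like a strong expander decorated with a sparse set of low-degree vertices, and then to verify that $G(n,m)$ has exactly this structure for every $m \ge \tau$. Since connectivity is monotone in edges, the event ``$G(n,m)$ is globally synchronizing for all $m \ge \tau$'' coincides with ``every connected $G(n,m)$ is globally synchronizing.'' For $m \ge (1+\eps)\tfrac{n\log n}{2}$ I would invoke the deterministic expander theorem of \cite{abdalla2022expander}: a union bound over the $O(n^2)$ relevant values of $m$, together with standard concentration of the minimum degree and of $\lambda_2$ of the adjacency matrix, shows that with high probability every such $G(n,m)$ satisfies its hypothesis. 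So the real work is the window $m \in [\tau,(1+\eps)\tfrac{n\log n}{2}]$, where $G(n,m)$ can have minimum degree as small as $1$ and is therefore outside the scope of \cite{abdalla2022expander}.

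The second ingredient is a structural decomposition, uniform over this window: with high probability, for every such $m$ one can write $V = C_m \sqcup L_m$, where $L_m$ is the set of vertices of degree at most $f(n)$ for a threshold $f(n)$ chosen slightly below $\log n$ (for instance $f(n) = (\log n)^{2/3}$). First-- and second--moment computations give $|L_m| = n^{o(1)}$, show that $L_m$ is an independent set whose members have pairwise-disjoint neighborhoods (so every core vertex is adjacent to at most one vertex of $L_m$), and show that $G[C_m]$ inherits the strong expansion of $G(n,p)$ for the corresponding $p$ — in particular it satisfies a quantitative form of the hypothesis of \cite{abdalla2022expander} and has minimum degree $(1-o(1))f(n)$. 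Here monotonicity helps: $L_m$ decreases as $m$ grows, so these events can be controlled simultaneously across the window.

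The core of the argument would then be a \emph{robust expander synchronization lemma}: if $G = G[C]\sqcup L$ has the structure above, then every critical point $\theta$ of $\mc{E}_G$ with $\nabla^2\mc{E}_G(\theta)\succeq 0$ is fully synchronized. I would prove this in three stages. (i) Restricted to the core, $\theta|_C$ satisfies the first-- and second--order optimality conditions for $\mc{E}_{G[C]}$ \emph{up to a defect} supported on the set of core vertices adjacent to $L$; re-running the expansion argument of \cite{abdalla2022expander} while tracking this defect should show that $\theta|_C$ lies in an arc $[a,b]$ of length less than $\pi/2$ (in fact $o(1)$). (ii) Every $v \in L$ has all of its neighbors in $C$, so at the critical point $\sum_{u \sim v}\sin(\theta_v - \theta_u) = 0$ with each $\theta_u \in [a,b]$; a short trigonometric argument then forces $\theta_v \in [a,b] \cup [a+\pi,b+\pi]$, and the necessary condition $\sum_{u\sim v}\cos(\theta_v-\theta_u) \ge 0$ — the $(v,v)$ diagonal entry of $\nabla^2\mc{E}_G(\theta)$ — rules out the antipodal arc, since there every summand is at most $-\cos(b-a) < 0$. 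Hence all phases of $\theta$ lie in the arc $[a,b]$ of length less than $\pi$. (iii) Since $G$ is connected, the standard maximum--principle argument — a vertex carrying the largest phase must agree with all of its neighbors, and connectivity then propagates this — forces all phases equal.

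The main obstacle will be stage (i): making the argument of \cite{abdalla2022expander} robust to the decorations. The difficulty is quantitative. The set of core vertices adjacent to $L$ can have size $n^{o(1)}$, which dwarfs the spectral gap of the core (a power of $\log n$), so one cannot simply bound the optimality defect in $\ell^2$ and divide by the gap. Instead one must exploit that this set is \emph{sparse and well-separated} — its vertices are at pairwise distance at least $3$, and it is incident to only $n^{o(1)}$ edges in total — so that in every cut and every phase configuration arising in the expansion argument, its contribution is dominated by the $\Omega(n^2 p)$ or $\Omega(|S|\log n)$ core edges that argument already controls. A secondary point needing care is the uniformity over all $m \ge \tau$: besides the structural decomposition, one must check the criterion of \cite{abdalla2022expander} for every $m$ beyond the window, which follows from concentration and a union bound as above.
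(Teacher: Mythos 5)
Your high-level reduction (a core-plus-sparse-low-degree decomposition, a robust deterministic synchronization criterion, and uniformity over the critical window with \cite{abdalla2022expander} handling large $m$) matches the paper's strategy, and your stages (ii) and (iii) are correct granted stage (i). But there are two genuine gaps. First, the parameter choice: with threshold $f(n)=(\log n)^{2/3}$ the core does \emph{not} ``inherit the strong expansion'' in the sense you need. Its minimum degree is only $O((\log n)^{2/3})=o(d)$ while $d\asymp \log n$ and $\alpha\asymp(\log n)^{-1/2}$, so the edge-expansion statement $e(X,Y^c)\gtrsim \eps\frac dn|X||Y^c|$ (\cref{lem:XY-bound-original}, which requires minimum degree at least $2(\eps+\alpha)d$, i.e.\ a constant fraction of $d$) already fails for $X$ a set of minimum-degree core vertices, and the hypothesis of \cite[Theorem~1.10]{abdalla2022expander} is violated since $\alpha(1+2c^+-c^-)/(1+c^-)^2\asymp(\log n)^{1/6}\to\infty$. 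To make the core a genuinely strong expander you must take the threshold to be a small constant multiple of $\log n$ (the paper uses $11\eps\log n$ in \cref{eq:def-of-B-W}), at which point the defect set has size $n^{c'}$ for a small power $c'>0$ rather than $n^{o(1)}$ --- harmless for the independence and disjoint-neighborhood claims, but it invalidates the quantitative bookkeeping (``$n^{o(1)}$ defect edges dwarfed by the core'') that your stage (i) leans on.

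Second, and more fundamentally, stage (i) --- the ``robust expander synchronization lemma'' --- is precisely the technical heart of the problem, and you leave it at the level of ``should show,'' yourself flagging it as the main obstacle. Treating $\theta|_C$ as an approximate stable state of $\mc{E}_{G[C]}$ and perturbing the argument of \cite{abdalla2022expander} is not obviously workable: the stability (second-order) inequalities used there are aggregated over sets $\mc{C}_\beta$ that may be tiny and concentrated exactly on defect-adjacent vertices, one would also need a quantitative approximate analogue of the half-circle lemma, and the Hessian of $\mc{E}_{G[C]}$ at $\theta|_C$ can genuinely lose positive semidefiniteness at those vertices. The paper avoids this route entirely: it proves a deterministic theorem (\cref{thm:main}) for the whole graph $G$ at once, with the low-degree set $B$ kept inside the argument, replacing the strong expansion of \cite{abdalla2022expander} by the two amplification lemmas \cref{lem:ratio-small,lem:ratio-large}; the new Case~2 of \cref{lem:ratio-large} handles the situation where $\mc{C}_\beta$ consists mostly of defect vertices using only connectivity (minimum degree $1$) together with the facts that $B$ is independent and its vertices have pairwise disjoint neighborhoods, and the weaker per-step angular gain is recouped by first iterating \cref{lem:ratio-small} enough times that $|\mc{C}_{\pi/2}|/|\mc{C}_\beta|$ is geometrically small. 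Until you supply a proof of a lemma of comparable strength --- by your perturbative route or otherwise --- the proposal is a plausible plan, not a proof.
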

\begin{remark}
There are two natural interpretations of this conjecture: the weaker interpretation is that for any time $m \geq \tau$, $G(n,m)$ is globally synchronizing with high probability. The stronger interpretation is that \emph{all} graphs $G(n,m)$ for $m\geq \tau$ are simultaneously globally synchronizing with high probability. The latter statement is \emph{not} a trivial consequence of the former as global synchronization is not a monotone property. For instance, every tree is globally synchronizing whereas a cycle of length greater than $5$ is not.
\end{remark}

Our main result is a confirmation of the stronger form of this conjecture. 

\begin{theorem}\label{thm:Gnm-synchronizing}
With notation as above, the following holds with high probability: for all $m\geq \tau$ simultaneously, $G(n,m)$ is globally synchronizing.    
\end{theorem}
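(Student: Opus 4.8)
The plan is to use the energy-landscape criterion recalled above — it suffices to show $\mathcal{E}_G$ has no spurious stable equilibria — and to split the process into a long \emph{supercritical} range $m \ge M := \tfrac12(1+\eps)n\log n$ for a small fixed $\eps>0$, and the \emph{critical window} $\tau \le m < M$. The supercritical range is the easy one: there $G(n,m)$ (equivalently $G(n,2m/n^2)$ with $2m/n^2 \ge (1+\eps)\log n/n$) whp satisfies the deterministic expansion hypothesis under which \cite{abdalla2022expander} proves global synchronization, and in this range the relevant spectral gap concentrates with failure probability $n^{-\omega(1)}$, so a union bound over the $O(n^2)$ values $m \ge M$ gives that \emph{all} of these graphs are simultaneously globally synchronizing. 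One cannot do this for $m$ near $\tau$: there $G(n,m)$ has leaves, so it is not an expander and \cite{abdalla2022expander} does not apply directly.

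For the critical window I would first establish a \emph{structural description} holding whp \emph{simultaneously} for all $\tau \le m < M$. Fix a small constant $\delta>0$, set $L_m = \{v : \deg_{G(n,m)}(v) < \delta\log n\}$ and $H_m = V\setminus L_m$, and prove: (i) $L_m$ is an independent set (so $G[H_m]=G-L_m$ and $G$ is $G[H_m]$ with the independent set $L_m$ attached to it); (ii) each vertex of $H_m$ has at most one neighbour in $L_m$; and (iii) $G[H_m]$ satisfies the expansion hypothesis of \cite{abdalla2022expander} with room to spare. Items (i)--(ii) are first-moment estimates — the expected numbers of edges inside $\{v:\deg<\delta\log n\}$, of pendant paths of length $\ge 2$, and of vertices with two low-degree neighbours are all $o(1)$ once $\delta$ is small enough (the Chernoff exponent controlling $\Pr[\mathrm{Bin}(n,p)<\delta\log n]$ exceeds $\tfrac12$). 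The subtle word is ``simultaneously'': global synchronization is not monotone, so these $o(1)$-probability events cannot be union-bounded over the $\Theta(n\log n)$ graphs in the window. Instead I would run the first-moment bounds along the edge-revealing process: an edge becomes ``bad'' (joins two then-low-degree vertices) only at the instant it is inserted, and the conditional probability of inserting a bad edge at any step is at most $|L_m|^2/(2(\binom n2 - m)) = n^{-1-\Omega(1)}$, so the sum over $\Theta(n\log n)$ steps is still $o(1)$; the other bad events are handled the same way. Item (iii) follows from standard spectral estimates for random graphs together with the fact that deleting the sparse, spread-out set $L_m$ from a good expander leaves a good expander.

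The heart of the matter is then a deterministic lemma: if $G = H\cup L$ with $L$ independent, every edge meeting $L$ going to $H$, $|L|=o(|V|)$, each vertex of $L$ of degree $O(\log|V|)$, each vertex of $H$ with $\le 1$ neighbour in $L$, and $G[H]$ satisfying the expansion hypothesis, then $\mathcal{E}_G$ has no spurious stable equilibria. There are two moves. First, \emph{peeling}: at a stable equilibrium $\theta$, nonnegativity of the $(\ell,\ell)$ diagonal entry of $\nabla^2\mathcal{E}_G(\theta)$ for a pendant vertex $\ell$ with neighbour $p$ forces $\theta_\ell=\theta_p$, and the identity $x^\top \nabla^2\mathcal{E}_{G-\ell}(\theta)\,x = \tilde x^\top \nabla^2\mathcal{E}_G(\theta)\,\tilde x$, where $\tilde x$ extends $x$ by $x_p$ in coordinate $\ell$, shows $\theta$ restricted to $G-\ell$ is again a stable equilibrium; iterating removes all degree-one vertices. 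Second, \emph{reduction to the expander}: deleting the remaining low-degree vertices of $L$ perturbs $\nabla\mathcal{E}$ and $\nabla^2\mathcal{E}$ only in $O(|L|\log|V|)=o(|V|)$ coordinates, each by $O(1)$, so $\theta$ restricted to $H$ is an \emph{approximate} stable equilibrium of $\mathcal{E}_{G[H]}$; a suitably robust form of the expander synchronization argument forces this restriction to have $o(1)$ spread. Finally a degree-$d$ vertex $v\in L$ with neighbours $a_1,\dots,a_d\in H$ satisfies $\sum_i\sin(\theta_v-\theta_{a_i})=0$ with the $\theta_{a_i}$ nearly equal, so $\theta_v$ is pinned near their common value by nonnegativity of the $(v,v)$ Hessian entry; combined with the pendant identities, $\theta$ is globally nearly synchronized, and being an honest equilibrium of the connected graph $G$ in the non-degenerate basin of the synchronized state (where $\nabla^2\mathcal{E}_G$ is the Laplacian), it is exactly synchronized.

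The step I expect to be the main obstacle is this deterministic lemma, and within it the chain ``stable equilibrium of $G$ $\Rightarrow$ approximate stable equilibrium of the expander $G[H]$ $\Rightarrow$ synchronized'': one needs a version of the expander synchronization theorem robust to $o(|V|)$ low-degree decorations and to an $o(1)$-sized violation of the equilibrium and positive-semidefiniteness conditions, followed by a rigidity statement that controls genuine equilibria close to the synchronized state. The secondary obstacle is keeping the structural description of the second paragraph valid for \emph{all} $m$ in the critical window at once; the first-passage argument along the edge process is the tool that upgrades the per-$m$ statement to the stronger simultaneous form demanded by the conjecture.
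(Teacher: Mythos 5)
Your random-graph scaffolding (low-degree set forming an independent set with no shared neighbours, expander core, process/union-bound argument for simultaneity, separate treatment of the denser regime) is close in spirit to the paper's Section 3. But the heart of your argument is the deterministic lemma, and the route you propose for it is exactly where the proof cannot currently be completed. Your plan is to restrict a stable state of $G$ to the core $H$, regard it as an \emph{approximate} stable state of $\mathcal{E}_{G[H]}$, and invoke ``a suitably robust form of the expander synchronization argument.'' No such robust form is available, and it is not a routine strengthening: the landscape argument of \cite{abdalla2022expander} applies the stability inequality with test vectors supported on the set $\mathcal{C}_\beta$ of large phases, which at the start of the amplification can have constant size, while truncating $G$ to $G[H]$ perturbs the gradient and the Hessian quadratic form by additive errors of order the number of support vertices adjacent to $L$ --- which can be comparable to $|\mathcal{C}_\beta|$ itself, i.e.\ to the very quantities the amplification inequalities control. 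Note also that your peeling step only removes genuine leaves; vertices of $L$ of degree $2,\dots,\delta\log n$ cannot be peeled by the diagonal-Hessian argument, so essentially all of $L$ is thrown onto the unproven robust step. The paper avoids approximate equilibria altogether: it proves a deterministic theorem (\cref{thm:main}) about ``defective expanders'' in which the \emph{exact} stationarity and stability of $\theta$ on the full graph $G$ (defects included) are used throughout, the expansion estimates are re-derived for $G$ with the defect set $B$ present, and the key new ingredient is the second amplification lemma (\cref{lem:ratio-large}) whose Case 2 treats the regime where $\mathcal{C}_\beta$ consists mostly of defect vertices using only connectivity and the structure of $B$; the resulting larger angle decrements are recouped by first iterating the small-ratio lemma (\cref{lem:ratio-small}) about $\eps/(40\alpha)$ times. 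So to complete your plan you would either have to prove a genuinely perturbation-robust version of the expander landscape theorem (new and nontrivial, and doubtful in the strength you need), or restructure so that the low-degree vertices stay inside the synchronization argument, as the paper does.

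A secondary, fixable miscalibration: your supercritical cutoff $M=\tfrac12(1+\eps)n\log n$ is too low for the union bound you invoke. Just above the connectivity threshold the hypotheses of the expander theorem of \cite{abdalla2022expander} (in particular the minimum-degree/degree-ratio conditions) fail with probability only $n^{-\Theta(\eps)}$ per graph, not $n^{-\omega(1)}$, so a union bound over $\Theta(n^2)$ values of $m$ does not close. This is why the paper runs its defective-expander argument on the whole window up to $\omega=5\log n/(n-1)$ and only switches to the off-the-shelf expander theorem for $p\geq\omega$, where the per-graph failure probability is $O(n^{-2.025})$ and the union bound over $\binom n2$ graphs barely survives. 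Your critical-window analysis would need to be extended to cover $m$ up to $Cn\log n$ for a suitably large constant $C$ for the same reason.
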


\paragraph{\bf Defective expanders are globally synchronizing} As mentioned above, Abdalla et.~al \cite{abdalla2022expander} obtain their statement about random graphs as a consequence of a deterministic statement about expanders. We now define various terms to formally state their result.

For a graph $G = (V,E)$, we denote the adjacency matrix by $A_G$. For a matrix $M$, $\|M\|$ will throughout denote the spectral norm (i.e.~$\ell_2 \to \ell_2$ operator norm). Moreover, we reserve $J$ for the all-ones matrix. 

\begin{definition}\label{def:expander-1}
A graph $G = (V,E)$ is an $(n,d,\alpha)$-expander if $|V| = n$ and
\[ \norm{A_G - \frac dn J} \leq \alpha d. \]
\end{definition}

The main result in \cite{abdalla2022expander} also requires a slightly more bespoke notion of expander which accounts for varying degrees in a graph in a more careful manner. 

\begin{definition}\label{def:expander-2}
A graph $G = (V,E)$ is an is an $(n,d,\alpha,c^-,c^+)$-expander if $G$ is an $(n,d,\alpha)$-expander and in addition
\[ c^-d I \preceq D - A_G - dI + \frac dn J \preceq c^+dI. \]
 Here $M \preceq M'$ if and only if $M'-M$ is positive semidefinite.
\end{definition}

Observe that if $G$ is a $(n,d,\alpha,c^-,c^+)$-expander then the minimum degree of $G$ is at least $(1+c^{-})d$.

The main technical result of \cite{abdalla2022expander} is the following. 
\begin{theorem}[{\cite[Theorem~1.10]{abdalla2022expander}}]\label{thm:tech}
Suppose that $G$ is a $(n,d,\alpha,c^-,c^+)$-expander graph with $c^{-}>-1$, $\alpha\le 1/5$, and 
\[\max\Big\{\frac{64\alpha(1+2c^{+}-c^{-})}{(1+c^{-})^2}, \frac{64\alpha(1+c^{+})\log\big(\frac{1+c^{+}+\alpha}{2\alpha}\big)}{(1+c^{-})(1+5c^{+}-4c^{-})}\Big\}<1.\]
Then $G$ is globally synchronizing. 
\end{theorem}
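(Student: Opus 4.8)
\textbf{Proof plan for Theorem~\ref{thm:tech}.}
The task is to prove the deterministic criterion: if $G$ is an $(n,d,\alpha,c^-,c^+)$-expander with $c^->-1$, $\alpha\le 1/5$, and the displayed $\max$ is $<1$, then $\mc{E}_G$ has no spurious local minima, hence $G$ is globally synchronizing. Fix a critical point $\theta$ with $\nabla^2\mc{E}_G(\theta)\succeq 0$ and assume $\theta$ is not fully synchronized; the goal is a contradiction. The standard approach (following Kassabov--Strogatz--Townsend and Abdalla et al.) rests on two pillars. First, the \emph{first-order condition} $\nabla\mc{E}_G(\theta)=0$ says $\sum_u A_{u,v}\sin(\theta_v-\theta_u)=0$ for every $v$; writing $z_v=e^{i\theta_v}$ this becomes $\on{Im}(\ol{z_v}\sum_u A_{u,v}z_u)=0$, i.e.\ the ``local field'' $\sum_u A_{u,v}z_u$ is a real multiple $r_v z_v$ of $z_v$. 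Second, the \emph{second-order condition}: the Hessian in the $\theta$-coordinates is $\nabla^2\mc{E}_G(\theta)=L_{A\cos}$, the Laplacian of the weighted graph with edge weights $A_{u,v}\cos(\theta_u-\theta_v)$, so PSD-ness gives $\sum_{u,v}A_{u,v}\cos(\theta_u-\theta_v)(x_u-x_v)^2\ge 0$ for all real vectors $x$.

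The plan is to run the test-function argument of \cite{abdalla2022expander}. Decompose each phase into an ``amplitude'' and use the vector $z=(z_v)\in\mb{C}^V$ with $|z_v|=1$. From the first-order condition, the diagonal matrix $R=\on{diag}(r_v)$ with $r_v=\on{Re}(\ol{z_v}(A_Gz)_v)$ satisfies $A_Gz=Rz$ componentwise. Summing, $z^*A_Gz=\sum_v r_v$, and since $\|A_G-\tfrac dn J\|\le\alpha d$ while $z$ can be taken (after a global rotation) to have small component along $\mbm 1$ — more precisely one splits $z=\mu\mbm 1 + w$ with $w\perp\mbm 1$ — one controls $z^*A_Gz$ in terms of $\|w\|$. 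The key quantitative step is to plug well-chosen real test vectors into the second-order inequality. Following \cite{abdalla2022expander}, natural choices are $x=\on{Re}(e^{-i\phi}z)$ and $x=\on{Im}(e^{-i\phi}z)$ for a parameter $\phi$, and, for the more delicate range, test vectors supported where the phases are ``spread out''. Adding the two inequalities from $\on{Re}$ and $\on{Im}$ and using $\cos(\theta_u-\theta_v)=\on{Re}(\ol{z_u}z_v)$ converts the second-order condition into a statement of the form $\on{Re}(z^*L_{A} z)$-type bounds versus $\|z^*A_Gz\|$-type quantities; combined with the expander hypotheses $c^-dI\preceq D-A_G-dI+\tfrac dn J\preceq c^+ dI$ and $\|A_G-\tfrac dn J\|\le\alpha d$, this forces $\|w\|$ to be small. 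One then shows that if $\|w\|$ is small enough (quantified exactly by the two terms in the $\max$), the critical point must in fact be fully synchronized: the ``spread'' $\sum_{u,v}A_{u,v}|z_u-z_v|^2$ is too small to sustain a non-synchronized stable configuration, using that each vertex has degree at least $(1+c^-)d$ and the spectral gap coming from $\alpha d$.

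Concretely, I would carry out the following steps in order. (1) Record the first- and second-order conditions in the $z$-variables and define $R$, $\mu$, $w$. (2) Using $\|A_G-\tfrac dn J\|\le \alpha d$ and $w\perp\mbm 1$, bound $|z^*A_Gz - \tfrac dn|z^*\mbm 1|^2|\le \alpha d\|w\|^2$ and similarly control $\on{Re}$ of relevant bilinear forms. (3) Feed the test vectors $\on{Re}(e^{-i\phi}z),\on{Im}(e^{-i\phi}z)$ into $x^\top L_{A\cos}x\ge 0$; optimizing over $\phi$ (or averaging) yields an inequality relating $\sum_v r_v$, $\|w\|^2$, and $d$. (4) Use the two-sided bound $c^-dI\preceq D-A_G-dI+\tfrac dn J\preceq c^+ dI$ to replace $D$ and the ``defect'' by $d$, $c^\pm$, turning the inequality into a scalar inequality in $\|w\|^2/\|z\|^2=:t\in[0,1]$ with coefficients built from $\alpha,c^-,c^+$. (5) Show the first branch $\tfrac{64\alpha(1+2c^+-c^-)}{(1+c^-)^2}<1$ handles the regime where $t$ is not too large, forcing $t=0$; for the complementary regime use a second family of localized test vectors (supported on the set of vertices whose phase deviates most), producing the logarithmic factor $\log\big(\tfrac{1+c^++\alpha}{2\alpha}\big)$ and the second branch of the $\max$. (6) Conclude $t=0$, i.e.\ $w=0$, so $z=\mu\mbm 1$ and $\theta$ is fully synchronized, contradiction; hence no spurious local minima and $G$ is globally synchronizing by the criterion recalled in the introduction.

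\textbf{Main obstacle.} The routine part is the spectral bookkeeping in steps (2) and (4); the genuine difficulty is step (5) — the logarithmic-loss regime. A single pair of global test vectors $\on{Re}(e^{-i\phi}z),\on{Im}(e^{-i\phi}z)$ is not sharp enough when the phases are badly spread, so one must design test vectors adapted to the level sets of the phase distribution and sum the second-order inequality over a dyadic range of thresholds, which is precisely where the $\log\big(\tfrac{1+c^++\alpha}{2\alpha}\big)$ enters; making the two regimes meet exactly at the stated $\max$ (so that \emph{some} branch always closes the argument) is the crux and requires care in choosing the cutoff between ``$t$ small'' and ``$t$ large.''
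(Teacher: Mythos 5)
There is no proof of \cref{thm:tech} in this paper to compare against: the statement is imported verbatim from \cite{abdalla2022expander} as background, so the relevant benchmark is the proof given there. Measured against that, your submission is a plan rather than a proof, and the decisive step is missing. You yourself flag step (5) --- the regime that produces the factor $\log\big(\frac{1+c^{+}+\alpha}{2\alpha}\big)$ and the matching of the two branches of the $\max$ --- as ``the crux,'' and nothing in the proposal actually carries it out; nor does any step derive the specific constants ($64$, the exponents of $(1+c^{-})$, the combination $1+5c^{+}-4c^{-}$) that the theorem asserts. A sketch that defers exactly the part where the stated hypothesis is used cannot be credited as a proof of the stated quantitative criterion.

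Beyond the incompleteness, the core of your outline points at the wrong mechanism. The global spectral bookkeeping you describe --- write $z=\mu\mbm{1}+w$ with $w\perp\mbm{1}$, test the Hessian against $\on{Re}(e^{-i\phi}z)$ and $\on{Im}(e^{-i\phi}z)$, and force $\|w\|=0$ --- is the older Ling--Xu--Bandeira/Kassabov--Strogatz--Townsend style of argument, which is precisely what loses extra logarithmic factors and does not reach a condition of the stated form. The actual proof in \cite{abdalla2022expander} works instead with the phase level sets $\mc{C}_\beta=\{v:|\theta_v|\ge\beta\}$: it starts from the half-circle lemma (\cref{lem:half-circle}), tests second-order stability against the kernel $K(\theta_u,\theta_v)=\sin(|\theta_u|-\min\{|\theta_v|,\pi/2\})$ to relate $e(\mc{C}_{\pi/2},\mc{C}_\beta)$ to $e(\mc{C}_\beta,\mc{C}_\gamma^c)$ (the inequality recorded here as \cref{lem:kernel-stability}), combines this with combinatorial expansion estimates in the spirit of \cref{lem:XY-bound-original}, and then amplifies over a decreasing sequence of angles so that $|\mc{C}_{\beta_k}|$ grows geometrically until it violates the spectral bound $|\mc{C}_\beta|\sin^2(\beta)\le \tfrac{5\alpha^2 n}{2}$ (\cref{lem:contradiction}); the logarithm in the hypothesis is exactly the total angular budget consumed by that iteration. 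Your passing remark about ``summing the second-order inequality over a dyadic range of thresholds'' gestures toward this level-set amplification, but it is not developed, and without it the argument as proposed would not close. So the gap is genuine: the quantitative heart of the theorem --- turning the expansion hypotheses into a terminating amplification with the stated constants --- is absent.
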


Standard results in random graph theory show that for any $\varepsilon > 0$, with high probability, $G \sim G(n, (1+\varepsilon)n^{-1}\log{n})$ is such an expander, in large part because for such a $G$, the ratio of the maximum degree and the minimum degree is bounded by a constant depending only on $\varepsilon$. 

However, for the random graph process at connectivity, i.e.~$G(n,\tau)$, with high probability there is a vertex of degree $1$. By the remark following \cref{def:expander-2}, this forces $c^{-}$ very close to $-1$ and one can easily check that therefore, \cref{thm:tech} does not apply in this setting. 

Our main contribution is the introduction of the notion of a ``defected expander'', which is strong enough to guarantee global synchronization, but at the same time, is weak enough to hold with high probability for $G(n,\tau)$.

\begin{theorem}\label{thm:main}
Fix $\eps\in (0,1)$, $\alpha\in (0,1/5)$ and $d$ such that $\alpha d \geq 1$.

Let $G = (V,E)$ be a graph with $V = W \cup B$ and no isolated vertices. Suppose that $G$ is a $(n,d,\alpha)$-expander. Furthermore, let the maximum degree of $G[W]$ be $d_\mr{max}$ and the minimum degree be lower bounded by $\ell:= 2(\eps + \alpha)d$. Define 
\[\delta = \eps d/(d_\mr{max} -  2\varepsilon d).\]

Suppose that the following hold:
\begin{align}
\frac{8(1+\delta)(d_\mr{max} + 1)}{\delta^2 \exp(\frac{\eps}{40\alpha} \log(1+\delta))} & \leq 1, \label{eq:condition-for-k*}\\ 
\frac{8}{d_\mr{max} - 4\eps d} & \leq 1. \label{eq:condition-for-d}
\end{align}
Furthermore suppose that $B$ satisfies
\begin{enumerate}[label={(B\arabic*)}, leftmargin = \leftmargin + 1\parindent]
\item\label{item:B-small} $|B| \leq \alpha n$,
\item\label{item:B-empty} No edge of $G$ has both of its endpoints in $B$,
\item\label{item:B-sparse} For all $v \in B$, $\deg v \leq d_\mr{max} + 1$,
\item\label{item:no-2-neighbors} For all $v \in V$, $|N(v) \cap B| \leq 1$.
\end{enumerate}
Then $G$ is globally synchronizing.
\end{theorem}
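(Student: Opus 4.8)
As recalled in the introduction, it suffices to show that every $\theta\in(\mb S^1)^V$ with $\nabla\mc E_G(\theta)=0$ and $\nabla^2\mc E_G(\theta)\succeq0$ is fully synchronized; suppose toward a contradiction that $\theta$ is such a spurious stable equilibrium. I would first record the optimality conditions in a usable form. Writing $\rho_v:=\sum_u A_{u,v}\cos(\theta_u-\theta_v)$, stationarity $\sum_u A_{u,v}\sin(\theta_v-\theta_u)=0$ says that the local order parameter $\sum_u A_{u,v}e^{i\theta_u}$ equals $\rho_v e^{i\theta_v}$, while $\nabla^2\mc E_G(\theta)$ is the Laplacian of $G$ with edge weights $A_{u,v}\cos(\theta_u-\theta_v)$, whose $v$-th diagonal entry is $\rho_v$; thus stability forces $\rho_v\ge0$ for every $v$ together with the stability inequality
\[\sum_{\{u,v\}\in E}\cos(\theta_u-\theta_v)(x_u-x_v)^2\ge0\qquad\text{for all }x\in\R^V.\]
After a global rotation we may assume the order parameter $\sum_v e^{i\theta_v}$ is a nonnegative real; choosing representatives $\theta_v\in(-\pi,\pi]$ this means $\sum_v\sin\theta_v=0$ and $\sum_v\cos\theta_v\ge0$.

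Next I would dispose of the easy case: if all phases lie in an open arc of length $<\pi$, pick $v^\ast$ with $\theta_{v^\ast}$ extremal within that arc; then every term of $0=\sum_u A_{u,v^\ast}\sin(\theta_{v^\ast}-\theta_u)$ is weakly of one sign, hence each vanishes, forcing $\theta_u=\theta_{v^\ast}$ for every neighbor $u$ of $v^\ast$. Since $G$ is connected (a consequence of the hypotheses), this equality propagates to all of $V$, so $\theta$ is fully synchronized --- a contradiction. Hence we may assume the phases are \emph{not} confined to any arc of length $<\pi$, so that the ``far set'' $S_r:=\{v:|\theta_v|\ge r\}$ is nonempty for every $r<\pi$.

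The heart of the argument is an expansion-driven bootstrap built from the expander mixing lemma $\bigl|e(X,Y)-\tfrac dn|X||Y|\bigr|\le\alpha d\sqrt{|X||Y|}$ together with the two optimality facts above, applied to arc-indexed vertex sets. It has two ingredients. First, a \emph{rigidity estimate}: for $r$ in a suitable range, a nonempty far set $S_r$ must actually be large --- for $v\in S_r$, $\rho_v\ge0$ forces a weighted majority of the neighbors of $v$ to lie within $\pi/2$ of $\theta_v$, which for $v\in W$ becomes, via the minimum-degree bound $\deg v\ge\ell=2(\eps+\alpha)d$, $\gtrsim\ell$ neighbors in a mild enlargement of $S_r$, and then expander mixing forces $|S_r|$ to be a positive fraction of $n$. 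Second, a \emph{shrinking step} driven by the stability inequality with the indicator test vectors $\mathbbm 1_{S_r}$ (together with $\rho_v\ge0$ and expander mixing): widening the arc by a fixed increment shrinks the far set by a factor of at least $1+\delta$, and this increment is $\Theta(\alpha)$-small, so one may run the shrinking step $k^\ast=\tfrac{\eps}{40\alpha}$ times while remaining in the valid range of radii. After these $k^\ast$ steps the far set has size at most $n(1+\delta)^{-k^\ast}\le\tfrac{\delta^2 n}{8(1+\delta)(d_\mr{max}+1)}$, where the last inequality is exactly \eqref{eq:condition-for-k*}; at that scale the rigidity estimate is incompatible with a nonempty far set, so $S_r=\varnothing$ for some $r<\pi$, contradicting our assumption that the phases are not confined to any arc of length $<\pi$. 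Inequality \eqref{eq:condition-for-d} is used to initialize this iteration (the base step needs $d_\mr{max}$ at least a fixed multiple of $\eps d$).

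The genuinely new point --- and what I expect to be the main obstacle --- is making this bootstrap robust to the defect set $B$, whose vertices may have degree as small as $1$, so the per-vertex steps above (which all rely on $\deg v\ge\ell$) do not apply to them directly; these are precisely the near-degree-$1$ vertices present in $G(n,\tau)$. Properties \ref{item:B-small}--\ref{item:no-2-neighbors} are calibrated so that $B$ contributes only lower-order errors at every step: \ref{item:B-small} makes $|B|\le\alpha n$ invisible to the expander mixing lemma; \ref{item:B-empty} forces every edge incident to $B$ into $W$, so the estimates at $W$-vertices are never doubly corrupted; \ref{item:B-sparse} keeps $\deg v\le d_\mr{max}+1$ for $v\in B$, matching the $d_\mr{max}+1$ in \eqref{eq:condition-for-k*}; and \ref{item:no-2-neighbors} guarantees each vertex loses at most one neighbor to the defect set, so a $W$-vertex of degree $\ge\ell$ still has $\ge\ell-1$ neighbors inside $W$. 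The resulting additive ``$-1$''s, and the lower bound $d_\mr{max}\ge4\eps d+8$ from \eqref{eq:condition-for-d}, are exactly what is needed to keep each shrinking step multiplying by $1+\delta$ rather than something smaller, and to keep the rigidity threshold effective. Verifying that all of these $B$-induced losses are genuinely lower order --- so that both ingredients survive and the $k^\ast$-step induction closes --- is the technical heart of the argument.
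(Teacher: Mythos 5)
There is a genuine gap, in two places.

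First, your endgame does not close. The extremal-vertex argument only shows that the phases of a nontrivial stable state cannot lie in an open half-circle; after the normalization $\rho_1(\theta)\in[0,1]$ this gives exactly $S_{\pi/2}\neq\emptyset$, and nothing more. It does \emph{not} follow that $S_r\neq\emptyset$ for every $r<\pi$: $S_r=\emptyset$ only says the phases lie in $(-r,r)$, an arc of length $2r$, which may well exceed $\pi$. So concluding ``$S_r=\emptyset$ for some $r<\pi$'' contradicts nothing you have established. Relatedly, the ``rigidity estimate'' you lean on (a nonempty far set must have size a positive fraction of $n$) is not available: the $\rho_v\ge0$ argument for $v\in S_r$ only places a cosine-weighted majority of its neighbors in the much larger set $S_{r-\pi/2}$ (an angular loss of $\pi/2$, not a ``mild enlargement''), and for a defect vertex of degree $1$ it produces a single edge, so mixing gives nothing. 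Small nonempty far sets are precisely the configurations that cannot be excluded in one shot; they must be amplified gradually. The paper's contradiction is instead with the spectral stability bound $|\mc{C}_\beta|\sin^2\beta\le\tfrac52\alpha^2 n$ (\cref{lem:contradiction}), reached by \emph{growing} the far set upward from the nonempty $\mc{C}_{\pi/2}$ while keeping the total angular drift below a constant; your proposal never invokes any such bound, and it also leaves the shrinking recursion uninitialized, since the expansion inputs behind each step only apply to sets of size at most $\alpha n$ (resp.\ $n/2$).

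Second, the defect set is not a ``lower-order $-1$'' correction, and this is where the paper's main new idea lives. The hard case is a far set $\mc{C}_\beta$ most of whose vertices lie in $B$: then no lower bound of the form $e(\mc{C}_\beta,Y)\gtrsim \tfrac dn|\mc{C}_\beta||Y|$ holds at all (every vertex of $\mc{C}_\beta$ may have degree $1$), so a fixed angular increment of size $\Theta(\alpha/\eps)$ yields no geometric gain and your single-phase scheme stalls exactly there. The paper handles this with a second amplification lemma (\cref{lem:ratio-large}) whose angular increment is not fixed but equals $\sin^{-1}\big(\tfrac{d_\mr{max}+1}{\delta}\cdot\tfrac{|\mc{C}_{\pi/2}|}{|\mc{C}_\beta|}\big)$; in the defect-dominated case its proof uses only minimum degree $1$ together with \ref{item:B-empty} and \ref{item:no-2-neighbors} to get $e(\mc{C}_\beta,\mc{C}_\gamma^c)>\delta|\mc{C}_\beta|$, combined with the crude bound $e(\mc{C}_{\pi/2},V)\le(d_\mr{max}+1)|\mc{C}_{\pi/2}|$ from \ref{item:B-sparse}. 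These variable increments sum to a convergent geometric series only because a first phase of $k^*=\lfloor\eps/(40\alpha)\rfloor$ applications of \cref{lem:ratio-small} has already forced $|\mc{C}_\beta|\ge(1+\delta)^{k^*}|\mc{C}_{\pi/2}|$; that is the actual role of \eqref{eq:condition-for-k*} (and \eqref{eq:condition-for-d} enters in bounding the increment in the non-defect case of \cref{lem:ratio-large}), not a size threshold to be played against a rigidity statement. Without this two-phase structure with ratio-dependent increments, or some equivalent mechanism for far sets living inside $B$, the bootstrap you describe does not go through.
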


\subsection{Notation}\label{subsec:not}
We use standard asymptotic notation throughout, as follows. For functions $f=f(n)$ and $g=g(n)$, we write $f=O(g)$ or $f \lesssim g$ to mean that there is a constant $C$ such that $|f(n)|\le C|g(n)|$ for sufficiently large $n$. Similarly, we write $f=\Omega(g)$ or $f \gtrsim g$ to mean that there is a constant $c>0$ such that $f(n)\ge c|g(n)|$ for sufficiently large $n$. Finally, we write $f\asymp g$ or $f=\Theta(g)$ to mean that $f\lesssim g$ and $g\lesssim f$, and we write $f=o(g)$ or $g=\omega(f)$ to mean that $f(n)/g(n)\to0$ as $n\to\infty$. Subscripts on asymptotic notation indicate quantities that should be treated as constants.

\subsection{Acknowledgements}  VJ is supported by NSF CAREER award DMS-2237646. CM is supported by NSF award ECCS-2217023 through the IDEAL institute. This research was conducted during the period MS served as a Clay Research Fellow. 

\subsection{Organization} In \cref{sec:proof-main}, we prove \cref{thm:main}. In \cref{sec:hitting-time}, we deduce \cref{thm:Gnm-synchronizing} from \cref{thm:main}. Since the proofs are quite short, we defer a discussion of the proof ideas to the respective sections.

\section{Proof of \texorpdfstring{\cref{thm:main}}{Theorem}}\label{sec:proof-main}

\subsection{Proof strategy}\label{sec:mixing-implies-synchronization}

Recall that we wish to show that the Kuramoto model on $G$ has no spurious local minima. For consistency with the literature, we refer to local minima as stable states.   

\begin{definition}
        We say that $\theta \in (\mb{S}^1)^V$ is a stable state on a graph $H = (V,E)$ if it is a local minimum of $\mc{E}_H$, i.e.~$\nabla \mc{E}_H(\theta) = 0$ and $\nabla^2 \mc{E}_H(\theta) \succeq 0$.
\end{definition}

    Note that every fully synchronized state, i.e.~$\theta_u = \theta_v$ for all $u,v \in V$ is a stable state. By a non-trivial stable state, we mean a stable state which is not fully synchronized. 

    As in \cite{abdalla2022expander}, we proceed by contradiction; suppose there exists a non-trivial stable state $\theta \in (\mb{S}^1)^V$. By performing a global phase change if necessary, we may (and will always) assume without loss of generality that $\rho_1(\theta) := |V|^{-1}\sum_{v \in V}e^{i\theta_v} \in [0,1]$. Viewing $\mb{S}^{1}$ as $(-\pi, \pi]$, for $\beta \in \mb{S}^{1}$, we define 
        \[ \mc{C}_\beta = \mc{C}_{\beta}(\theta) := \{ v \in V : |\theta_v| \geq \beta \}. \]
Observe that $|\mc{C}_\beta|\ge |\mc{C}_{\beta'}|$ if $\beta\le \beta'$.

The well-known half-circle lemma asserts that the phases of a non-trivial stable state cannot lie in a common half-circle (see e.g.~\cite[Lemma 2.8]{abdalla2022expander}).

\begin{lemma}\label{lem:half-circle}
Suppose $\theta$ is a nontrivial stable state on a connected graph $H$ with $\rho_1(\theta) \in [0,1]$.  Then $\mc{C}_{\pi/2} \ne \emptyset$.
\end{lemma}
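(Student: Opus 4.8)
The plan is to prove the contrapositive: assuming $\mc{C}_{\pi/2}(\theta) = \emptyset$ — i.e.\ every phase lies strictly inside the open half-circle, so $\theta_v \in (-\pi/2,\pi/2)$ for all $v \in V$ — I would show that $\theta$ must be a fully synchronized state, contradicting the hypothesis that $\theta$ is nontrivial. This direction uses only the first-order stationarity condition $\nabla \mc{E}_H(\theta) = 0$ together with the connectivity of $H$; the positive-semidefiniteness of $\nabla^2\mc{E}_H(\theta)$ is not needed for this particular lemma. Writing $\partial_{\theta_v}\mc{E}_H(\theta) = \sum_{u \in V} A_{u,v}\sin(\theta_v - \theta_u)$, stationarity says $\sum_{u \sim v}\sin(\theta_v - \theta_u) = 0$ for every $v \in V$.

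Next I would pick a vertex $v^\ast \in \argmax_{v} \theta_v$. For any neighbour $u$ of $v^\ast$ we have $\theta_{v^\ast} - \theta_u \ge 0$ by maximality, and $\theta_{v^\ast} - \theta_u < \pi/2 - (-\pi/2) = \pi$ since all phases lie in $(-\pi/2,\pi/2)$; hence $\theta_{v^\ast} - \theta_u \in [0,\pi)$, so $\sin(\theta_{v^\ast} - \theta_u) \ge 0$ with equality if and only if $\theta_u = \theta_{v^\ast}$. Since stationarity at $v^\ast$ forces the sum of these nonnegative terms to vanish, every neighbour $u$ of $v^\ast$ must satisfy $\theta_u = \theta_{v^\ast}$.

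Finally, set $S = \{ v \in V : \theta_v = \theta_{v^\ast}\}$. The argument just given applies verbatim to every vertex of $S$ (each attains the maximum phase), so $S$ is closed under taking neighbours; as $H$ is connected and $S \ne \emptyset$, we conclude $S = V$, i.e.\ $\theta$ is fully synchronized — contradiction. (Equivalently one could run the same argument at a phase-minimizing vertex.) There is no serious obstacle here; the only point requiring care is the angular bookkeeping — verifying that confining the phases to an open arc of length $\pi$ keeps each difference $\theta_{v^\ast} - \theta_u$ in $[0,\pi)$, which is precisely what makes $\sin(\theta_{v^\ast}-\theta_u)$ nonnegative and detects equality. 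The normalization $\rho_1(\theta)\in[0,1]$ is irrelevant to this lemma beyond fixing a representative in the rotation orbit.
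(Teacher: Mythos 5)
Your proof is correct. Note that the paper does not prove \cref{lem:half-circle} itself; it simply cites the known half-circle lemma (\cite[Lemma 2.8]{abdalla2022expander}), and your argument is the standard self-contained proof of exactly this fact: since $\mc{C}_{\pi/2}=\emptyset$ means every $\theta_v$ lies \emph{strictly} inside $(-\pi/2,\pi/2)$, all phase differences at a maximizing vertex fall in $[0,\pi)$, so the stationarity condition forces equality with all neighbors, and connectivity propagates this to full synchrony. Your observations that only the first-order condition (not the Hessian) is needed, and that the normalization $\rho_1(\theta)\in[0,1]$ plays no role here, are both accurate; the only delicate point, the open-versus-closed half-circle distinction, is handled correctly by the strict inequality in the definition of $\mc{C}_{\pi/2}$.
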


Starting from \cref{lem:half-circle}, we will iteratively construct a sequence of angles to arrive at an angle $\beta$ for which satisfies
    \begin{equation}
    \label{eq:contradiction}
    |\mc{C}_\beta| \sin^2(\beta) > \frac{5 \alpha^2 n}{2}.
    \end{equation}
However, this contradicts the following result due to Abdalla et.~al. \cite{abdalla2022expander}.
\begin{lemma}[{\cite[eq.~(4.2)]{abdalla2022expander}}]\label{lem:contradiction}
Let $H$ be an $(n,d,\alpha)$-expander for $\alpha \leq 1/5$. For every stable state $\theta$ with $\rho_1(\theta) \in [0,1]$ and $\beta \in \mb{S}^1$, we have
        \[ |\mc{C}_\beta| \sin^2(\beta) \leq \frac{5 \alpha^2 n}{2}. \]
\end{lemma}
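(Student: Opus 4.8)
The plan is to push both defining conditions of a stable state, $\nabla\mc{E}_H(\theta)=0$ and $\nabla^2\mc{E}_H(\theta)\succeq 0$, through the expander decomposition $A_H = \tfrac dn J + E$, $\snorm{E}\le\alpha d$. Write $z = (e^{i\theta_v})_{v\in V}$. A short computation shows the gradient condition is equivalent to $(A_H z)_v = \lambda_v z_v$ for all $v$, with $\lambda_v := \sum_u (A_H)_{uv}\cos(\theta_u-\theta_v)\in\mb{R}$, and that nonnegativity of the diagonal of the (positive semidefinite) Hessian forces $\lambda_v\ge 0$. Since $r := \rho_1(\theta)\in[0,1]$ means $\sum_v z_v = rn$, the decomposition gives $\lambda_v z_v = dr + (Ez)_v$ for every $v$, hence, after multiplying by $\bar z_v$ and comparing real and imaginary parts,
\[ |(Ez)_v|^2 = (\lambda_v - dr\cos\theta_v)^2 + d^2r^2\sin^2\theta_v . \]

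From this identity I would read off the bound in terms of $r$. Let $v\in\mc{C}_\beta$. If $|\theta_v|\le\pi/2$ then $\sin^2\theta_v\ge\sin^2\beta$, so $|(Ez)_v|^2\ge d^2r^2\sin^2\beta$; and if $\cos\theta_v\le 0$ then $\lambda_v - dr\cos\theta_v\ge -dr\cos\theta_v\ge 0$ (using $\lambda_v\ge 0$), so $|(Ez)_v|^2\ge d^2r^2(\cos^2\theta_v+\sin^2\theta_v) = d^2r^2\ge d^2r^2\sin^2\beta$. Summing over $\mc{C}_\beta$ and using $\sum_v|(Ez)_v|^2 = \snorm{Ez}^2\le\alpha^2 d^2\snorm{z}^2 = \alpha^2 d^2 n$ yields $|\mc{C}_\beta|\sin^2\beta\le\alpha^2 n/r^2$; summing the weaker pointwise bound $|(Ez)_v|^2\ge d^2 r^2\sin^2\theta_v$ over all $v$ yields $\sum_v\sin^2\theta_v\le\alpha^2 n/r^2$. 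So everything reduces to showing $r^2\ge 2/5$.

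To bound $r$ from below I would invoke the second-order condition with the test vectors $(\sin\theta_v)_v$ and $(\cos\theta_v)_v$: adding the two inequalities $y^{\top}\nabla^2\mc{E}_H(\theta)\,y\ge 0$ and simplifying via $\sin^2+\cos^2=1$ and $\sin\theta_u\sin\theta_v+\cos\theta_u\cos\theta_v=\cos(\theta_u-\theta_v)$ collapses everything to $z^* A_H z \ge \sum_{u,v}(A_H)_{uv}\cos^2(\theta_u-\theta_v) = \tfrac12\big(\mbf{1}^{\top}A_H\mbf{1} + w^* A_H w\big)$ with $w := (e^{2i\theta_v})_v$. Expanding the three quadratic forms through $A_H = \tfrac dn J + E$ and bounding the resulting $E$-terms by $2\alpha d n$ in total gives $r^2 + 2\alpha \ge \tfrac12 + \tfrac12|\rho_2|^2$, where $\rho_2 := n^{-1}\sum_v e^{2i\theta_v}$; and then $|\rho_2|\ge\Re\rho_2 = 1 - \tfrac2n\sum_v\sin^2\theta_v \ge 1 - 2\alpha^2/r^2$. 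Using $|\rho_2|\ge 0$ first gives the crude bound $r^2\ge\tfrac12-2\alpha>0$ (in particular $r>0$); feeding this back alternately into the last two inequalities, a short monotone bootstrap drives $r^2$ above $2/5$ (the iteration in fact converges to a fixed point strictly larger than $2/5$ for every $\alpha\in(0,1/5]$). Plugging $r^2\ge 2/5$ into $|\mc{C}_\beta|\sin^2\beta\le\alpha^2 n/r^2$ gives $|\mc{C}_\beta|\sin^2\beta\le\tfrac52\alpha^2 n$.

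The main obstacle is exactly this lower bound on $r$: a single application of the second-order condition only yields $r^2\ge\tfrac12-2\alpha$, which equals $1/10$ at $\alpha=1/5$ and is not enough; the self-improving coupling of $r$ with $\sum_v\sin^2\theta_v$ (equivalently with $\Re\rho_2$) is essential, and one must check that the bootstrap really clears the threshold $2/5$ over the whole range $\alpha\in(0,1/5]$ --- this is where the constant $5/2$ comes from. The trigonometric identities and the $A_H = \tfrac dn J + E$ expansions in the third paragraph are routine but need care with signs and with which error terms acquire a factor $\tfrac12$.
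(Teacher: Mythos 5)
The paper does not prove this lemma at all --- it is imported verbatim from \cite[eq.~(4.2)]{abdalla2022expander} --- and your argument is a correct self-contained reconstruction along the same lines as that source: the first-order condition plus the decomposition $A_H=\tfrac dn J+E$ gives $|\mc{C}_\beta|\sin^2\beta\le \alpha^2 n/\rho_1^2$ (your case split using $\lambda_v\ge 0$ when $\cos\theta_v\le 0$ is exactly the right fix for angles past $\pi/2$), and the $\sin\theta,\cos\theta$ Hessian test vectors give the coupled inequalities $\rho_1^2+2\alpha\ge\tfrac12+\tfrac12|\rho_2|^2$ and $|\rho_2|\ge 1-2\alpha^2/\rho_1^2$. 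The one step you left as an assertion, that the bootstrap clears $\rho_1^2>2/5$, does check out: since both inequalities are worst at $\alpha=1/5$, it suffices to verify $\tfrac12-2\alpha+\tfrac12\left(1-2\alpha^2/x\right)^2>x$ for all $x\in[\tfrac12-2\alpha,\tfrac25]$ at $\alpha=1/5$ (the fixed point there is $\approx 0.43$), which rules out any $\rho_1^2\le 2/5$; only note that, as in the source, the statement is really used (and proved) for $\beta\in(0,\pi/2]$, which is all the paper ever needs.
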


The key technical challenge in our work is that $G$ in \cref{thm:main} is only a defective expander. In particular the expansion properties available to us when dealing with the exceptional low degree vertices are substantially different than those implied by \cref{def:expander-2}, which are used crucially in \cite{abdalla2022expander}. We discuss our main ingredient to overcome this in \cref{sec:amplification}. 

\subsection{Expansion properties of \texorpdfstring{$G$}{G}}\label{sec:expansion-on-G}

We first prove that the core $G[W]$ satisfies essentially the same spectral condition as $G$. 

\begin{lemma}\label{lem:spectral-GW}
Let $G$ be as in \cref{thm:main}. Then $G[W]$ is a $(|W|,d,2\alpha)$-expander.
\end{lemma}

\begin{proof}
    Let $n' = |W|$. We have
    \begin{align*} \norm{A_{G[W]} - \frac{d}{n'}J_{n'}} & \leq \norm{A_{G[W]} - \frac dn J_{n'}} + \norm{\paren{\frac{d}{n'} - \frac dn} J_{n'}} \\ & \leq \norm{A_G - \frac dn J_n} + d\paren{\frac{n - n'}{n}} \leq \alpha d + \alpha d. \end{align*}
Here we have used the triangle inequality, that $\|J_{n'}\| = n'$, and that the spectral norm of a submatrix is at most that of the original matrix. The final bound uses that $G$ is an $(n,d,\alpha)$-expander and \cref{item:B-small}.
\end{proof}

The spectral expansion of the core in the above sense, along with the assumed degree bounds on $G[W]$, imply strong combinatorial expansion properties on $G[W]$. Our proofs employ the next two lemmas, which are specializations of corresponding results in \cite{abdalla2022expander}. Throughout, for a graph $H = (V,E)$ with adjacency matrix $A_H$ and $X,Y \subseteq V$, we define 
\[e(X,Y) = e_H(X,Y) := \sum_{x \in X, y \in Y}(A_H)_{x,y}.\]
    
\begin{lemma}[{\cite[Lemma 3.3]{abdalla2022expander}}]
Let $H$ be an $(n,d,\alpha)$-expander.  Then for $X \subseteq V(H)$, we have
\begin{align}
e(X,X) & \leq \frac dn |X|^2 + \alpha d |X|. \label{eq:XX-bound}
\end{align}
\end{lemma}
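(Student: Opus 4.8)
The statement to be proven is the specialization of \cite[Lemma 3.3]{abdalla2022expander}: for an $(n,d,\alpha)$-expander $H$, we have $e(X,X) \leq \frac{d}{n}|X|^2 + \alpha d |X|$ for all $X \subseteq V(H)$.

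\textbf{Plan.} The plan is to work directly from the spectral hypothesis $\snorm{A_H - \frac{d}{n}J} \leq \alpha d$. First I would write $e(X,X) = \mathbbm{1}_X^\top A_H \mathbbm{1}_X$ where $\mathbbm{1}_X \in \mathbb{R}^{V(H)}$ is the indicator vector of $X$, so that $\snorm{\mathbbm{1}_X}_2^2 = |X|$. Then I would decompose $A_H = \left(A_H - \frac{d}{n}J\right) + \frac{d}{n}J$, giving
\[
e(X,X) = \mathbbm{1}_X^\top \left(A_H - \tfrac{d}{n}J\right)\mathbbm{1}_X + \tfrac{d}{n}\,\mathbbm{1}_X^\top J \mathbbm{1}_X.
\]
The second term is exactly $\frac{d}{n}|X|^2$ since $\mathbbm{1}_X^\top J \mathbbm{1}_X = (\mathbbm{1}_X^\top \mathbbm{1})^2 = |X|^2$. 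For the first term, I would apply Cauchy--Schwarz and the definition of spectral norm:
\[
\left|\mathbbm{1}_X^\top \left(A_H - \tfrac{d}{n}J\right)\mathbbm{1}_X\right| \leq \snorm{\mathbbm{1}_X}_2 \cdot \snorm{A_H - \tfrac{d}{n}J} \cdot \snorm{\mathbbm{1}_X}_2 \leq \alpha d \, |X|.
\]
Combining the two bounds yields $e(X,X) \leq \frac{d}{n}|X|^2 + \alpha d |X|$, as desired.

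\textbf{Main obstacle.} There isn't really a substantive obstacle here — this is a routine quadratic-form estimate, and the only things to be careful about are bookkeeping: making sure the indicator vector is normalized correctly (its squared $\ell_2$-norm is $|X|$, not $|X|^2$), and noting that we only need the upper bound (the absolute value from Cauchy--Schwarz is discarded on the favorable side). One could alternatively split $X$ against $V \setminus X$ or diagonalize, but the one-line quadratic-form argument above is cleanest. If a matching statement for $e(X,Y)$ with $X \neq Y$ were needed, one would use $\mathbbm{1}_X^\top(A_H - \frac{d}{n}J)\mathbbm{1}_Y \leq \alpha d \sqrt{|X||Y|}$ in the same way, but for the diagonal case $X = Y$ stated here the above suffices.
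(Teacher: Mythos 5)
Your proof is correct: the quadratic-form decomposition $e(X,X)=\mathbbm{1}_X^\top(A_H-\tfrac{d}{n}J)\mathbbm{1}_X+\tfrac{d}{n}|X|^2$ together with the spectral-norm bound $\snorm{A_H-\tfrac{d}{n}J}\le\alpha d$ and $\snorm{\mathbbm{1}_X}_2^2=|X|$ gives exactly the stated estimate. The paper itself imports this lemma from \cite[Lemma 3.3]{abdalla2022expander} without reproving it, and your argument is the same standard expander-mixing-type computation used there.
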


\begin{lemma}[{\cite[Proposition 3.2 and Lemma 3.7]{abdalla2022expander}}]\label{lem:XY-bound-original}
Fix $\eps>0$. Let $H$ be an $(n,d,\alpha)$-expander with minimum degree at least $2(\eps + \alpha)d$ and maximum degree at most $d_\mr{max}$. 

If $X \subseteq Y \subset V(H)$ satisfy $|Y| \leq n/2$ and $|Y| \leq (1+\delta)|X|$ for $\delta = \eps d/(d_\mr{max} - 2\eps d)$, then
\[ \eps \frac dn |X| |Y^c| \leq e(X,Y^c). \]
\end{lemma}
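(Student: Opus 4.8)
The plan is to prove \cref{thm:main} by contradiction along the lines sketched in \cref{sec:mixing-implies-synchronization}: assume there is a non-trivial stable state $\theta$, normalized so that $\rho_1(\theta) \in [0,1]$, and derive the bound \eqref{eq:contradiction}, which contradicts \cref{lem:contradiction} (applicable since $G$ is an $(n,d,\alpha)$-expander with $\alpha \le 1/5$). The engine driving the argument is an \emph{amplification} step: starting from \cref{lem:half-circle}, which gives $\mc{C}_{\pi/2} \ne \emptyset$, I would iteratively produce a decreasing sequence of angles $\beta_0 = \pi/2 > \beta_1 > \beta_2 > \cdots$ such that the sets $\mc{C}_{\beta_k}$ grow geometrically in size until either $|\mc{C}_{\beta_k}|$ surpasses roughly $\alpha^2 n / \sin^2(\beta_k)$ (giving \eqref{eq:contradiction}), or the sets can no longer be grown — at which point a separate estimate must close the gap. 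The role of hypotheses \eqref{eq:condition-for-k*} and \eqref{eq:condition-for-d} is precisely to guarantee that the geometric growth, with ratio $(1+\delta)$ per step, runs for enough steps (the exponent $\frac{\eps}{40\alpha}\log(1+\delta)$ is the number of doublings available before $|\mc{C}_\beta|$ reaches $\approx \alpha n$) to overshoot the \cref{lem:contradiction} threshold.

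The core of each amplification step is a first-order stationarity estimate. Because $\nabla \mc{E}_G(\theta) = 0$, for every vertex $v$ one has $\sum_{u \in N(v)} \sin(\theta_v - \theta_u) = 0$; summing a suitable trigonometric identity over the vertices in a level set $\mc{C}_\beta$ and using $\nabla^2 \mc{E}_G(\theta) \succeq 0$ yields, for an appropriately chosen smaller angle $\beta'$, a lower bound on $e_G(\mc{C}_{\beta'}, \mc{C}_\beta^c)$ in terms of $|\mc{C}_\beta|$ and the angles — morally, many edges must leave $\mc{C}_\beta$ to reach vertices whose phase is substantially closer to $0$. On the other hand, \cref{lem:XY-bound-original} applied to $H = G[W]$ (which is a $(|W|, d, 2\alpha)$-expander by \cref{lem:spectral-GW}, with minimum degree $\ge \ell = 2(\eps+\alpha)d$ and maximum degree $\le d_\mr{max}$ by hypothesis) gives an \emph{upper} bound $e_{G[W]}(X, Y^c) \le \eps \frac{d}{|W|}|X||Y^c|$ is violated unless $|Y| > (1+\delta)|X|$; contrapositively, if $X = \mc{C}_{\beta'} \cap W$ and $Y = \mc{C}_\beta \cap W$ and the edge count is too large, then $|Y|$ must exceed $(1+\delta)|X|$, which is the geometric growth we want — read in the direction that lets us conclude $|\mc{C}_{\beta'}|$ is genuinely larger than $|\mc{C}_\beta|$. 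Iterating, and tracking how $\beta$ decreases at each step (it decreases by a controlled amount depending on $\delta$ and the current angle), produces the sequence; \eqref{eq:XX-bound} is used to bound spurious contributions of edges internal to the level sets.

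The genuinely new difficulty — and where conditions \ref{item:B-small}--\ref{item:no-2-neighbors} enter — is that all of this expansion machinery lives on the core $G[W]$, whereas the stationarity identities are statements about $G$, which includes the bad set $B$. The reconciliation I would carry out is: \ref{item:B-empty} ensures $B$ is an independent set, so every edge incident to $B$ has its other endpoint in $W$; \ref{item:no-2-neighbors} ensures each vertex of $W$ has at most one neighbor in $B$, so passing from $e_G$ to $e_{G[W]}$ loses at most $|X|$ edges per set — a lower-order term compared to the $\eps \frac{d}{n}|X||Y^c| \gtrsim \eps \eps d |X|$ main term, since $|Y^c| = \Omega(n)$ throughout the relevant range; \ref{item:B-sparse} controls the degree of bad vertices so that the first-order identities summed over $\mc{C}_\beta$ are not thrown off by a bad vertex with enormous degree; and \ref{item:B-small} is what makes \cref{lem:spectral-GW} go through with the factor-$2$ loss in $\alpha$. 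The bookkeeping here is the main obstacle: one must verify that every place where \cite{abdalla2022expander} invokes the minimum-degree bound from \cref{def:expander-2} (which fails for us because of degree-$1$ vertices) has been rerouted through $G[W]$, and that the resulting error terms genuinely stay below the $\frac{5\alpha^2 n}{2}$ threshold after all the geometric amplification — this is exactly the content of the somewhat delicate inequalities \eqref{eq:condition-for-k*} and \eqref{eq:condition-for-d}, and getting the constants to close is where the real work lies.
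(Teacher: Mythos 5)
Your proposal does not prove the statement at hand. The statement is \cref{lem:XY-bound-original}, a deterministic edge-distribution estimate for an $(n,d,\alpha)$-expander $H$ with minimum degree at least $2(\eps+\alpha)d$ and maximum degree at most $d_\mr{max}$: namely $e(X,Y^c) \geq \eps \frac dn |X||Y^c|$ whenever $X \subseteq Y$, $|Y| \leq n/2$, and $|Y| \leq (1+\delta)|X|$. What you have written instead is a strategy sketch for \cref{thm:main} (the contradiction argument, the amplification sequence of angles, the role of the set $B$, and the conditions \eqref{eq:condition-for-k*}--\eqref{eq:condition-for-d}), and in the course of that sketch you \emph{invoke} \cref{lem:XY-bound-original} as a black box applied to $G[W]$. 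So the lemma itself is never established: nowhere do you use the minimum-degree hypothesis, the maximum-degree hypothesis, or the specific value $\delta = \eps d/(d_\mr{max}-2\eps d)$ to produce the claimed lower bound on $e(X,Y^c)$; indeed at one point you even describe the lemma's conclusion as an \emph{upper} bound on $e(X,Y^c)$ to be read contrapositively, which inverts the actual statement.

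For the record, the paper's proof is a short parameter translation: the degree bounds together with the spectral condition $\snorm{A_H - \frac dn J} \leq \alpha d$ show that $H$ is an $(n,d,\alpha,c^-,c^+)$-expander in the sense of \cref{def:expander-2} with $c^- = 2\eps - 1 + \alpha$ (from the minimum degree $2(\eps+\alpha)d$) and $c^+ = d_\mr{max}/d - 1 + \alpha$ (from the maximum degree), after which the conclusion is exactly \cite[Proposition 3.2 and Lemma 3.7]{abdalla2022expander} with this choice of parameters. A correct submission should either carry out this verification and citation, or give a direct argument (e.g., bounding $e(X,Y^c) = e(X,V) - e(X,Y)$ from below using the minimum degree of $X$ and an expander-mixing upper bound on $e(X,Y)$, with the constraint $|Y| \leq (1+\delta)|X|$ and the maximum degree controlling the loss); your text does neither.
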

\begin{proof}
This is an application of \cite[Proposition 3.2]{abdalla2022expander} with $c^+ = (d_\mr{max}/d) - 1 + \alpha$, $c^- = 2\eps -1 + \alpha$, followed by an application of \cite[Lemma 3.7]{abdalla2022expander}. We remark that one may obtain a stronger lower bound of $(\eps+\alpha) d/n|X||Y^c|$, but the first term will be sufficient in our applications. 
\end{proof}

We will apply these lemmas for $G[W]$ together with our assumptions on the defects $B$ to prove nearly the same properties for $G$.

\begin{lemma}\label{lem:XV-bound}
Let $G$ be as in \cref{thm:main}. For $X \subseteq V$, we have
\[ e(X,V) \leq (d_\mr{max} + 1) |X|. \]
\end{lemma}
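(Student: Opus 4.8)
The plan is to bound $e(X,V)$ by splitting the sum over edges according to whether an endpoint lies in $W$ or in $B$. Write $X = X_W \cup X_B$ with $X_W = X \cap W$ and $X_B = X \cap B$. Since $G$ has no isolated vertices and, by \ref{item:B-empty}, no edge has both endpoints in $B$, every edge incident to $X$ either joins two vertices of $W$ or joins a vertex of $W$ to a vertex of $B$. Thus
\[
e(X,V) \leq \sum_{v \in X_W} \deg_G(v) + \sum_{v \in X_B} \deg_G(v).
\]
For the $X_B$ term, hypothesis \ref{item:B-sparse} gives $\deg_G(v) \leq d_\mr{max} + 1$ for each $v \in X_B$, so that sum is at most $(d_\mr{max}+1)|X_B|$.

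The remaining work is to control $\sum_{v \in X_W}\deg_G(v)$. For $v \in W$, split $\deg_G(v) = \deg_{G[W]}(v) + |N(v) \cap B|$. The first term is at most $d_\mr{max}$ by the maximum-degree assumption on $G[W]$ in \cref{thm:main}, and the second is at most $1$ by \ref{item:no-2-neighbors}. Hence $\deg_G(v) \leq d_\mr{max} + 1$ for every $v \in X_W$ as well, and summing over $v \in X_W$ gives $(d_\mr{max}+1)|X_W|$. Adding the two contributions yields $e(X,V) \leq (d_\mr{max}+1)(|X_W| + |X_B|) = (d_\mr{max}+1)|X|$, as desired.

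I do not expect any genuine obstacle here: the statement is essentially just the observation that every vertex of $G$ has degree at most $d_\mr{max}+1$, which follows immediately from combining the degree bound on the core $G[W]$ with conditions \ref{item:B-sparse} and \ref{item:no-2-neighbors}, and then $e(X,V) \leq \sum_{v \in X}\deg_G(v)$. The only point requiring a moment's care is making sure no edge is double-counted or missed — but since we are bounding $e(X,V)$ from above by a sum of degrees (which counts each edge incident to $X$ at least once), the inequality direction is safe regardless. One could equivalently phrase the whole argument as: the maximum degree of $G$ is at most $d_\mr{max} + 1$, therefore $e(X,V) \le (d_\mr{max}+1)|X|$.
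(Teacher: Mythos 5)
Your proof is correct and essentially identical to the paper's: both reduce the bound to the observation that every vertex of $G$ has degree at most $d_\mr{max}+1$, using the maximum degree of $G[W]$ together with \ref{item:no-2-neighbors} for vertices of $W$ and \ref{item:B-sparse} for vertices of $B$ (the paper phrases this as the edge-count decomposition $e(X,V) = e(X\cap W, W) + e(X\cap W, B) + e(X\cap B, V)$, which is the same computation). No issues.
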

\begin{proof}
Note that
\begin{align*}
e(X,V) & = e(X \cap W, W) + e(X \cap W, B) + e(X \cap B, V) \\
& \leq d_\mr{max}|X \cap W| + |X \cap W| + (d_\mr{max} + 1) |X \cap B| \\
& = (d_\mr{max} + 1)|X|.
\end{align*}
Here we bound the first term using the definition of $d_\mr{max}$, the second term using \cref{item:no-2-neighbors}, and the last term using \cref{item:B-sparse}.   
\end{proof}

\begin{lemma}\label{lem:XYc-to-XX}
Let $G$ be as in \cref{thm:main} and recall $\delta = (\eps d)/(d_\mr{max} - 2\varepsilon d)$.

For $X \subseteq Y \subseteq V$ with $|Y| \leq n/2$, $|Y| \leq (1+\delta)|X|$ and $|X| \leq \alpha n$, we have
        \[ e(X,Y^c) > \frac{\eps}{20\alpha} e(X,X). \]
    \end{lemma}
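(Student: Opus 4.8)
The plan is to compare both sides against quantities controlled by the previous lemmas. For the right-hand side, I would first restrict to the core: since $B$ is an independent set (\cref{item:B-empty}) and every vertex has at most one neighbor in $B$ (\cref{item:no-2-neighbors}), we have $e(X,X) \le e(X\cap W, X\cap W) + |X|$. Applying the expander edge bound \cref{eq:XX-bound} to $G[W]$, which is a $(|W|,d,2\alpha)$-expander by \cref{lem:spectral-GW}, gives $e(X\cap W, X\cap W) \le \frac{d}{|W|}|X\cap W|^2 + 2\alpha d |X\cap W| \le \frac{d}{|W|}|X|^2 + 2\alpha d|X|$. Using $|W| \ge (1-\alpha)n$ and $|X| \le \alpha n$, the first term is at most $\frac{\alpha d}{1-\alpha}|X| \le 2\alpha d|X|$ (since $\alpha < 1/5$), so altogether $e(X,X) \le 4\alpha d|X| + |X| \le 5\alpha d|X|$, where I use the hypothesis $\alpha d \ge 1$ to absorb the additive $|X|$. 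Thus it suffices to prove $e(X,Y^c) > \frac{\eps}{20\alpha}\cdot 5\alpha d|X| = \frac{\eps d}{4}|X|$, or something in this ballpark.

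For the left-hand side I want a lower bound on $e(X,Y^c)$ that is linear in $|X|$ with a constant of order $\eps d$. The natural tool is \cref{lem:XY-bound-original} applied to $G[W]$: with $X' = X\cap W \subseteq Y' = Y\cap W$, provided $|Y'| \le |W|/2$ and $|Y'| \le (1+\delta)|X'|$ and the minimum-degree hypothesis $\ell = 2(\eps+\alpha)d$ transfers to $G[W]$ (here the $2\alpha$ in \cref{lem:spectral-GW} means I should check the hypothesis of \cref{lem:XY-bound-original} with $\alpha$ replaced by $2\alpha$, i.e. I need minimum degree $\ge 2(\eps + 2\alpha)d$ — I will need to see whether the stated $\ell$ suffices or whether a small adjustment of constants is needed, possibly replacing $\eps$ by $\eps/2$ in the intermediate application), I get $e_{G[W]}(X', (Y')^c \cap W) \ge \eps \frac{d}{|W|}|X'||(Y')^c \cap W|$. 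Since edges counted in $G[W]$ from $X'$ to $(Y')^c\cap W$ are also edges of $G$ from $X$ to $Y^c$, and $|(Y')^c \cap W| = |W| - |Y\cap W| \ge |W| - |Y| \ge n/2 - \alpha n \ge n/4$, this yields $e(X,Y^c) \ge \eps \frac{d}{|W|}\cdot |X'| \cdot \frac{n}{4} \gtrsim \eps d |X\cap W|$. Finally $|X\cap W| \ge |X| - |X\cap B|$, and $|X\cap B|$ is small: each vertex of $X\cap B$ contributes... actually I should bound $|X\cap B|$ directly — if $|X\cap B|$ were a large fraction of $|X|$ this argument degrades, so I would instead note that $e(X, Y^c) \ge e(X\cap B, Y^c)$ contributions can also be harvested, but more cleanly: every $v \in B$ has $\deg v \ge \ell = 2(\eps+\alpha)d$ (no isolated vertices plus... no — $B$ vertices could have small degree). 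Let me instead just split $X = (X\cap W)\cup(X\cap B)$ and handle the case $|X\cap W| \ge |X|/2$ by the above (getting $e(X,Y^c) \gtrsim \eps d |X|$), and the case $|X\cap W| < |X|/2$ separately by observing that then $|X\cap B| > |X|/2$, and each $v\in X\cap B$ has all its neighbors in $W$ (by \cref{item:B-empty}), with at most... hmm, its neighbors could lie in $Y$. This case needs the chain-argument structure from the amplification section rather than a one-shot estimate.

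The main obstacle, then, is the defect set $B$: the clean version of the argument works when $X$ is mostly inside the core, and the hypotheses \crefrange{item:B-small}{item:no-2-neighbors} are exactly calibrated so that $B$ contributes only lower-order corrections, but making this rigorous requires carefully tracking how $|X\cap B|$, $|Y\cap B|$ interact with the $(1+\delta)$-expansion condition — in particular that passing from $G$ to $G[W]$ does not destroy the hypothesis $|Y'| \le (1+\delta)|X'|$ (it could fail if $Y$ gains relatively more $B$-vertices than $X$). I expect the resolution is that \cref{item:no-2-neighbors} bounds $|Y\cap B| \le$ (number of $W$-vertices adjacent to $B$)$\le |Y\cap W|\cdot(\text{something})$, no — rather $|B\cap N(Y\cap W)|$, letting one re-derive a slightly weaker expansion ratio for the core that is still enough after adjusting $\eps$ by a constant factor, which is why the final constant is $\eps/(20\alpha)$ rather than the $\eps/(4\alpha)$-type bound the naive computation suggests. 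I would organize the write-up as: (1) upper bound $e(X,X) \le 5\alpha d|X|$ via \cref{lem:spectral-GW} and \cref{eq:XX-bound}; (2) verify the hypotheses of \cref{lem:XY-bound-original} for $G[W]$ with adjusted constants, checking the min-degree and the $(1+\delta)$-condition survive restriction to $W$ using \crefrange{item:B-small}{item:no-2-neighbors}; (3) conclude $e(X,Y^c) \ge e_{G[W]}(X\cap W, W\setminus (Y\cap W)) \ge \eps\frac{d}{|W|}|X\cap W|\cdot\frac{n}{4} > \frac{\eps}{20\alpha}\cdot 5\alpha d |X| \ge \frac{\eps}{20\alpha} e(X,X)$, absorbing the discrepancy between $|X\cap W|$ and $|X|$ into the slack between the constant obtained and the constant claimed.
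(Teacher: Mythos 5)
Your high-level plan (upper-bound $e(X,X)$ via \cref{lem:spectral-GW} and \cref{eq:XX-bound}, lower-bound $e(X,Y^c)$ via \cref{lem:XY-bound-original} applied to $G[W]$) matches the paper, but there is a genuine gap exactly where you flag it: the case where $X$ sits mostly in $B$. The paper needs no case split because it bounds \emph{both} sides in terms of $|X_W|:=|X\cap W|$ rather than $|X|$: by \cref{item:B-empty} and \cref{item:no-2-neighbors}, $e(X,X)=e(X_W,X_W)+2e(X_W,X_B)+e(X_B,X_B)\le e(X_W,X_W)+2|X_W|$ (note that under the paper's convention $e(X,X)=\sum_{x,y\in X}A_{x,y}$, the same convention in which \cref{eq:XX-bound} is stated, your intermediate bound ``$e(X,X)\le e(X\cap W,X\cap W)+|X|$'' is off --- a single $B$-vertex adjacent to many vertices of $X_W$ gives $2e(X_W,X_B)=2|X_W|>|X|$ --- though that alone only costs a constant). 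Then \cref{eq:XX-bound} for $G[W]$, together with $|X_W|/|W|\le\alpha/(1-\alpha)$ and $\alpha d\ge 1$, gives $e(X,X)\le\tfrac{21}{4}\alpha d\,|X_W|$. Comparing this with $e(X,Y^c)\ge e(X_W,(Y^c)_W)\ge\eps\tfrac{d}{|W|}|X_W|\,|(Y^c)_W|$, the factor $|X_W|$ cancels, and $|(Y^c)_W|/|W|\ge\tfrac12-\alpha\ge\tfrac{3}{10}$ yields the stated $\eps/(20\alpha)$. In particular no lower bound on $|X_W|$ in terms of $|X|$ is ever needed, so the regime $|X\cap B|>|X|/2$ causes no difficulty and no ``chain argument'' is required. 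Your proposal, by contrast, reduces to needing $e(X,Y^c)\gtrsim\eps d|X|$, which genuinely fails to follow from core expansion when $|X_W|\ll|X|$ (all your lower bounds scale with $|X_W|$), and your fallback for that case is explicitly left unresolved; as written the proposal therefore does not prove the lemma.

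On your second concern --- whether the hypotheses of \cref{lem:XY-bound-original} survive restriction to $W$ (the minimum-degree/$2\alpha$ bookkeeping, $|Y_W|\le|W|/2$, and especially $|Y_W|\le(1+\delta)|X_W|$, which can degrade when $Y$ loses fewer $B$-vertices than $X$) --- you have identified a real subtlety: the paper's own write-up invokes \cref{lem:XY-bound-original} for the pair $(X_W,Y_W)$ in $G[W]$ without spelling out these verifications, relying on the slack indicated in the remark following that lemma. So your instinct about where the delicate point lies is sound, but in your write-up this is raised and speculated about rather than resolved; together with the unhandled $B$-heavy case, the argument is incomplete. The fix is not more machinery but the bookkeeping change above: measure everything against $|X_W|$, and reserve the disjoint-neighborhood argument you gesture at for where the paper actually uses it, namely Case 2 of \cref{lem:ratio-large}.
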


\begin{proof}
For $S\subseteq V$, let $S_W := S \cap W$ and $S_B := S \cap B$.  Expand
\[ e(X,X) = e(X_W, X_W) + 2e(X_W, X_B) + e(X_B,X_B) \leq e(X_W, X_W) + 2|X_W|; \]
the inequalities used follow from \cref{item:no-2-neighbors} and \cref{item:B-empty}. Combining this with \cref{eq:XX-bound} (recalling $G[W]$ is a $(|W|,d,2\alpha)$-expander), we have
        \begin{align*}
            e(X,X) & \leq \frac d{|W|} |X_W|^2 + 2\alpha d |X_W| + 2|X_W| \leq \frac{21}{4}\alpha d|X_W|.
        \end{align*}
In the last inequality, we use $|X_W|/|W| \leq |X|/(n-|B|) \leq \alpha/(1- \alpha) \leq 5\alpha/4$ and $1 \leq \alpha d$.  Now, using the lower bound from \cref{lem:XY-bound-original},
        \begin{align*}
            e(X,Y^c) & \geq e(X_W, (Y^c)_W) \geq \eps \frac d{|W|} |X_W| |(Y^c)_W| \geq \frac{4\eps}{21\alpha} \cdot \frac{|(Y^c)_W|}{|W|} \cdot e(X,X).
        \end{align*}
        Finally, note that $|(Y^c)_W|/|W| \geq (|Y^c| - |B|)/n \geq \frac12 - \alpha \geq \frac3{10}$, and that $12/210 > 1/20$.
    \end{proof}

\subsection {Amplification tools}
\label{sec:amplification}

We will construct a sequence of angles controlling both $\beta$ and $|\mc{C}_\beta|$ until we can contradict \cref{lem:contradiction}.  Our key amplification tools are \cref{lem:ratio-small,lem:ratio-large}, which allow us to inductively grow the size of $\mc{C}_\beta$ without decreasing $\beta$ too much. While \cref{lem:ratio-small} is similar to \cite[Lemma~4.3]{abdalla2022expander}, there are several key differences in both the statement and the proof of \cref{lem:ratio-large}, as compared to the corresponding \cite[Lemma~4.4]{abdalla2022expander}. Notably, the strong expansion conditions in \cite{abdalla2022expander} guarantee that for any $X, Y\subseteq V$ with $Y$ sufficiently large, $e(X, Y) > \eps d/n |X||Y|$; this is crucial in the proof of \cite[Lemma~4.4]{abdalla2022expander}. A global bound of this nature is no longer true in our case, for instance, if $X$ happens to consist only of vertices of constant degree. We therefore develop a version which relies only on weaker expansion properties of $G$ (as in \cref{sec:expansion-on-G}), along with the connectivity of $G$ (see, in particular, Case 2 in the proof of \cref{lem:ratio-large}); the price we pay is that we need the gap between $\beta_k - \beta_{k+1}$ to be much larger (logarithmic in $|V|$, for our main application) than in \cite{abdalla2022expander} in order to guarantee geometric growth in the size of $|\mc{C}_{\beta_i}|$. Nevertheless, we show that this is sufficient for the overall amplification argument, by iterating \cref{lem:ratio-small} several further times so that $|\mc{C}_{\pi/2}|/|\mc{C}_\beta|$ is small enough to recoup this additional (logarithmic) error.

The proofs of the main results of this subsection need the following lemma.

\begin{lemma}\label{lem:kernel-stability}
        Let $\theta$ be a stable state with $\rho_1(\theta) \in [0,1]$.  For all $0 < \gamma < \beta \leq \pi/2$,
        \begin{equation*} e(\mc{C}_\beta,\mc{C}_\beta) \geq e(\mc{C}_{\pi/2}, \mc{C}_{\beta}) \geq \sin(\beta - \gamma) e(\mc{C}_\beta, \mc{C}_\gamma^c). \end{equation*}
    \end{lemma}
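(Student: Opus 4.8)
The first inequality is purely combinatorial and uses nothing about stability: since $\beta \le \pi/2$ we have $\mc{C}_{\pi/2}\subseteq\mc{C}_\beta$, and as every entry of the adjacency matrix $A$ of the underlying graph is nonnegative, restricting one index set only decreases the sum, so $e(\mc{C}_\beta,\mc{C}_\beta)\ge e(\mc{C}_{\pi/2},\mc{C}_\beta)$.

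For the second inequality I would use only the first-order optimality condition $\nabla\mc{E}(\theta)=0$, i.e.\ $\sum_u A_{uv}\sin(\theta_v-\theta_u)=0$ for every $v$. Multiplying the identity at $v$ by $\mathrm{sign}(\theta_v)$ — well defined on $\mc{C}_\beta$, where $|\theta_v|\ge\beta>0$ — and summing over $v\in\mc{C}_\beta$ gives $0=\Sigma_{\mathrm{in}}+\Sigma_{\mathrm{out}}$, where $\Sigma_{\mathrm{in}}$ collects the pairs with both endpoints in $\mc{C}_\beta$ and $\Sigma_{\mathrm{out}}$ the pairs from $\mc{C}_\beta$ to $\mc{C}_\beta^c$. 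Antisymmetrising, $\Sigma_{\mathrm{in}}$ retains only edges joining a positive-phase vertex to a negative-phase vertex inside $\mc{C}_\beta$; on such an edge $\{p,q\}$ one has $\theta_p-\theta_q\in[2\beta,2\pi)$, so $\sin(\theta_p-\theta_q)\ge0$ unless $\theta_p+|\theta_q|>\pi$, and in that exceptional case at least one of $p,q$ has absolute phase $\ge\pi/2$ (if both were below $\pi/2$ their sum could not exceed $\pi$), so the edge is already counted by $e(\mc{C}_{\pi/2},\mc{C}_\beta)$. Thus $\Sigma_{\mathrm{in}}$ is bounded below by an explicit multiple of $-e(\mc{C}_{\pi/2},\mc{C}_\beta)$ — and, handled carefully, by $-e(\mc{C}_{\pi/2},\mc{C}_{\pi/2})$, which is the slack that ultimately makes the constant sharp.

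For $\Sigma_{\mathrm{out}}$ I would split the outer endpoint $u$ according to whether $u\in\mc{C}_\gamma^c$ or $u\in\mc{C}_\gamma\setminus\mc{C}_\beta$. Consider first $v\in\mc{C}_\beta\setminus\mc{C}_{\pi/2}$ and $u\in\mc{C}_\gamma^c$; by the symmetry $\theta\mapsto-\theta$ (which fixes $\mc{C}_\beta$, $\mc{C}_{\pi/2}$, $\mc{C}_\gamma^c$ and the hypothesis $\rho_1(\theta)\in[0,1]$) we may assume $\theta_v>0$, so $\theta_v\in[\beta,\pi/2)$ and $\theta_u\in(-\gamma,\gamma)$, whence $\theta_v-\theta_u\in(\beta-\gamma,\pi/2+\gamma)\subseteq(0,\pi)$. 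On this interval $\sin$ attains its minimum at the left endpoint, because $\sin(\pi/2+\gamma)=\cos\gamma\ge\sin\beta\cos\gamma\ge\sin(\beta-\gamma)$; hence $\mathrm{sign}(\theta_v)\sin(\theta_v-\theta_u)\ge\sin(\beta-\gamma)$, and these edges contribute at least $\sin(\beta-\gamma)\,e(\mc{C}_\beta\setminus\mc{C}_{\pi/2},\mc{C}_\gamma^c)$ to $\Sigma_{\mathrm{out}}$. The $u\in\mc{C}_\gamma\setminus\mc{C}_\beta$ terms (with $v\notin\mc{C}_{\pi/2}$) turn out to have the favorable sign and can simply be discarded, while every remaining term — those incident to $\mc{C}_{\pi/2}$ together with the genuine wrap-around terms — is at least $-A_{uv}$, and a short case analysis on the angle difference shows that whenever $\mathrm{sign}(\theta_v)\sin(\theta_v-\theta_u)$ fails to be nonnegative the edge $\{u,v\}$ has an endpoint in $\mc{C}_{\pi/2}$; here I expect to additionally invoke the second-order condition, namely that the diagonal entries $\sum_u A_{uv}\cos(\theta_v-\theta_u)$ of the positive semidefinite Hessian $\nabla^2\mc{E}(\theta)$ are nonnegative, which forces any $v$ of phase near $\pi$ adjacent to a near-zero vertex to have many neighbours in $\mc{C}_{\pi/2}$. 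Feeding these bounds into $0=\Sigma_{\mathrm{in}}+\Sigma_{\mathrm{out}}$ and using $e(\mc{C}_\beta,\mc{C}_\gamma^c)=e(\mc{C}_\beta\setminus\mc{C}_{\pi/2},\mc{C}_\gamma^c)+e(\mc{C}_{\pi/2},\mc{C}_\gamma^c)$ then yields $\sin(\beta-\gamma)\,e(\mc{C}_\beta,\mc{C}_\gamma^c)\le e(\mc{C}_{\pi/2},\mc{C}_\beta)$.

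I expect the bookkeeping just described to be the main obstacle: the $\sin$ and $\cos$ terms wrap around $\mb{S}^1$, and one must check that each term of the "wrong" sign — arising from an angle difference exceeding $\pi$, or from a vertex whose phase is near $\pi$ — is charged to an edge counted by $e(\mc{C}_{\pi/2},\mc{C}_\beta)$, with nothing charged twice, so that the final constant is exactly $1$ and not merely some absolute constant. Summing over $\mc{C}_\beta\setminus\mc{C}_{\pi/2}$ rather than over all of $\mc{C}_\beta$, and retaining (rather than discarding) the nonnegative within-$\mc{C}_{\pi/2}$ contributions to $\Sigma_{\mathrm{in}}$, is what I anticipate will make the constant come out sharp.
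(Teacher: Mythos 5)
Your first inequality is fine and matches the paper. The second inequality is where the lemma actually lives, and your plan for it has a genuine gap at exactly the crux. Multiplying the stationarity identity by $\mathrm{sign}(\theta_v)$ and summing over $v\in\mc{C}_\beta$ cannot, by itself, produce the constant $1$ against $e(\mc{C}_{\pi/2},\mc{C}_\beta)$. Concretely, in $\Sigma_{\mathrm{in}}$ an opposite-sign pair $u,v\in\mc{C}_\beta$ with $|\theta_u|+|\theta_v|>\pi$ contributes $2A_{uv}\sin(|\theta_u|+|\theta_v|)$, which can be as small as $-2A_{uv}$ even when only one endpoint lies in $\mc{C}_{\pi/2}$ (take $|\theta_v|\approx\pi$, $|\theta_u|$ just below $\pi/2$); such an edge is counted only once by $e(\mc{C}_{\pi/2},\mc{C}_\beta)$, so your charging loses a factor up to $2$, and the proposed sharper charge to $e(\mc{C}_{\pi/2},\mc{C}_{\pi/2})$ is simply false for these pairs. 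Separately, in $\Sigma_{\mathrm{out}}$ the terms with $v\in\mc{C}_{\pi/2}$ and $u\in\mc{C}_\gamma^c$ are needed with coefficient $+\sin(\beta-\gamma)$ to assemble the full $e(\mc{C}_\beta,\mc{C}_\gamma^c)$, yet $\mathrm{sign}(\theta_v)\sin(\theta_v-\theta_u)$ can be negative there (e.g.\ $\theta_v$ near $\pi$, $\theta_u$ near $-\gamma$), and these edges run from $\mc{C}_{\pi/2}$ into $\mc{C}_\beta^c$, so they are \emph{not} counted by $e(\mc{C}_{\pi/2},\mc{C}_\beta)$ and cannot be absorbed into the left-hand side; "has an endpoint in $\mc{C}_{\pi/2}$" is not the right criterion. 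Finally, the appeal to the Hessian diagonal ("forces a vertex near $\pi$ to have many neighbours in $\mc{C}_{\pi/2}$") is a hope, not an argument: the second-order condition gives $\sum_u A_{uv}\cos(\theta_u-\theta_v)\ge 0$ per vertex, which does not by itself yield any such neighbour-counting statement.

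The paper sidesteps all of this by using the clipped test angle rather than the sign: combining $\sum_u A_{uv}\sin(\theta_u-\theta_v)=0$ with $\sum_u A_{uv}\cos(\theta_u-\theta_v)\ge 0$ gives, for every $v$, the inequality $\sum_u A_{uv}\sin\bigl(\theta_u-\mathrm{sign}(\theta_v)\min\{|\theta_v|,\pi/2\}\bigr)\ge 0$ (this is the content of eq.~(4.3) of \cite{abdalla2022expander}), i.e.\ one works with the kernel $K(\alpha,\beta)=\sin(|\alpha|-\min\{|\beta|,\pi/2\})$. The clipping at $\pi/2$ is precisely what removes the wrap-around losses you are fighting: one has the exact pointwise bound $K(\alpha,\beta)+K(\beta,\alpha)\le \mbm{1}_{|\alpha|\ge\pi/2}+\mbm{1}_{|\beta|\ge\pi/2}$, which yields the $e(\mc{C}_{\pi/2},\mc{C}_\beta)$ term with constant exactly $1$, and the bounds $K(\theta_u,\theta_v)\le 0$ for $u\in\mc{C}_\gamma\setminus\mc{C}_\beta$ and $K(\theta_u,\theta_v)\le\sin(\gamma-\beta)$ for $u\in\mc{C}_\gamma^c$ then finish the proof. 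If you want to repair your argument, replace the $\mathrm{sign}(\theta_v)$ multiplier by this rotation of $\theta_v$ to $\pm\min\{|\theta_v|,\pi/2\}$; as written, your outline does not close.
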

    \begin{proof}
        The first inequality follows since $\mc{C}_{\pi/2} \subseteq \mc{C}_{\beta}$. The second inequality is implicit in the proof of \cite[Lemma~4.4]{abdalla2022expander} and \cite[eq.~(4.3)]{abdalla2022expander}; we sketch the details for completeness. Define $K:(-\pi, \pi] \times (-\pi, \pi] \to \mb{R}$ by $K(\alpha, \beta) = \sin(|\alpha| - \min\{|\beta|,\pi/2\})$. Direct computation shows that
        \[K(\alpha,\beta) + K(\beta,\alpha) \leq \mbm{1}_{|x|\geq \pi/2}(\alpha) + \mbm{1}_{|x|\geq \pi/2}(\beta).\]
        Therefore, for $A$ the adjacency matrix,
        \begin{align*}
            \sum_{u,v \in \mc{C}_{\beta}}A_{u,v}K(\theta_u,\theta_v) &= \frac{1}{2}\sum_{u,v \in \mc{C}_{\beta}}A_{u,v}\left(K(\theta_u,\theta_v)+ K(\theta_u,\theta_v)\right)\\
            &\leq \frac{1}{2}\sum_{u,v \in \mc{C}_{\beta}}A_{u,v}\left(\mbm{1}_{|x|\geq \pi/2}(\theta_u) + \mbm{1}_{|x|\geq \pi/2}(\theta_v)\right) = e(\mc{C}_{\pi/2}, \mc{C}_{\beta}). 
        \end{align*}
        Hence, it suffices to show that $\sum_{u,v \in \mc{C}_{\beta}}A_{u,v}K(\theta_u, \theta_v) \geq \sin(\beta-\gamma)e(\mc{C}_{\beta}, C_{\gamma}^c)$. 

       From \cite[eq~(4.3)]{abdalla2022expander}, we have that
       \[\sum_{u,v\in \mc{C}_{\beta}}A_{u,v}K(\theta_u, \theta_v) + \sum_{u\in C_{\gamma}\setminus C_{\beta}, v\in \mc{C}_{\beta}}A_{u,v}K(\theta_u, \theta_v) + \sum_{u \in \mc{C}_{\gamma}^c, v\in \mc{C}_{\beta}}A_{u,v}K(\theta_u, \theta_v) \geq 0\]
       Moreover, direct computation shows that for $v \in \mc{C}_{\beta}$,
       \[
       K(\theta_u,\theta_v)\leq \begin{cases}
           1, \quad u \in \mc{C}_{\beta},\\
           0, \quad u \in \mc{C}_{\gamma}\setminus \mc{C}_\beta,\\
           \sin(\gamma-\beta), \quad u \in \mc{C}_\gamma^c.
       \end{cases}
       \]
       Therefore,
       \begin{align*}
           0 &\leq \sum_{u,v\in \mc{C}_{\beta}}A_{u,v}K(\theta_u, \theta_v) + \sum_{u\in \mc{C}_\gamma^c, v \in \mc{C}_{\beta}}A_{u,v}\cdot \sin(\gamma-\beta) \\
           &\leq  \sum_{u,v\in \mc{C}_{\beta}}A_{u,v}K(\theta_u, \theta_v) + e(C_{\beta}, C_{\gamma}^c)\cdot \sin(\gamma-\beta),
       \end{align*}
       so that $\sum_{u,v \in \mc{C}_{\beta}}A_{u,v}K(\theta_u, \theta_v) \geq e(C_{\beta}, C_{\gamma}^c)\cdot \sin(\beta-\gamma)$, as desired. \qedhere
       
    \end{proof}

We now state and prove our main amplification results. 

\begin{lemma}\label{lem:ratio-small}
Let $G$ be as in \cref{thm:main} and recall that $\delta = (\eps d)/(d_\mr{max} - 2 \varepsilon d)$. Let $\theta$ be a stable state on $G$ with $\rho_1(\theta) \in [0,1]$.

If the angles $0 < \gamma < \beta \leq \pi/2$ satisfy
        \[ \sin(\beta - \gamma) \geq \frac{20\alpha}{\eps}, \]
        then
        \[ |\mc{C}_\gamma| > \min\left\{ (1+\delta) |\mc{C}_\beta|, \alpha n \right\}. \]
    \end{lemma}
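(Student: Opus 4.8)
The strategy is a proof by contradiction combined with the expansion estimates of \cref{sec:expansion-on-G}. Suppose $|\mc{C}_\gamma| \leq (1+\delta)|\mc{C}_\beta|$ and also $|\mc{C}_\gamma| \leq \alpha n$; I will derive a contradiction with \cref{lem:kernel-stability}. Set $X = \mc{C}_\beta$ and $Y = \mc{C}_\gamma$, so that $X \subseteq Y$ (since $\gamma < \beta$, recalling $|\mc{C}_{\beta'}|\le|\mc{C}_\beta|$ for $\beta\le\beta'$ monotonicity runs the other way, so indeed $\mc{C}_\beta\subseteq\mc{C}_\gamma$), and $|Y| \leq (1+\delta)|X|$, $|Y| \leq \alpha n \leq n/2$. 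These are exactly the hypotheses of \cref{lem:XYc-to-XX}, which gives $e(X, Y^c) > \frac{\eps}{20\alpha} e(X,X)$, i.e.\ $e(\mc{C}_\beta, \mc{C}_\gamma^c) > \frac{\eps}{20\alpha} e(\mc{C}_\beta, \mc{C}_\beta)$.

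On the other hand, \cref{lem:kernel-stability} (applied with this $\gamma$ and $\beta$, which satisfy $0 < \gamma < \beta \leq \pi/2$) gives
\[ e(\mc{C}_\beta, \mc{C}_\beta) \geq \sin(\beta - \gamma)\, e(\mc{C}_\beta, \mc{C}_\gamma^c). \]
Combining the two displayed inequalities yields
\[ e(\mc{C}_\beta, \mc{C}_\beta) \geq \sin(\beta-\gamma)\cdot \frac{\eps}{20\alpha}\, e(\mc{C}_\beta,\mc{C}_\beta) \geq \frac{20\alpha}{\eps}\cdot\frac{\eps}{20\alpha}\, e(\mc{C}_\beta,\mc{C}_\beta) = e(\mc{C}_\beta,\mc{C}_\beta), \]
using the hypothesis $\sin(\beta-\gamma) \geq 20\alpha/\eps$. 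This chain is only a contradiction if the inequalities are strict somewhere; the strictness comes from \cref{lem:XYc-to-XX}, which gives a \emph{strict} inequality $e(X,Y^c) > \frac{\eps}{20\alpha}e(X,X)$. To make this rigorous I need $e(\mc{C}_\beta,\mc{C}_\beta) > 0$, equivalently that $\mc{C}_\beta$ spans at least one edge; but if $e(\mc{C}_\beta,\mc{C}_\beta) = 0$ then the strict inequality from \cref{lem:XYc-to-XX} forces $e(\mc{C}_\beta,\mc{C}_\gamma^c) > 0$, while \cref{lem:kernel-stability} forces $e(\mc{C}_\beta,\mc{C}_\beta) \geq \sin(\beta-\gamma) e(\mc{C}_\beta,\mc{C}_\gamma^c) > 0$, again a contradiction. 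So in all cases we reach a contradiction, and hence $|\mc{C}_\gamma| > \min\{(1+\delta)|\mc{C}_\beta|, \alpha n\}$.

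\textbf{Main obstacle.} The only real subtlety is bookkeeping: making sure the set inclusion direction is correct (smaller angle gives a larger cap, so $\mc{C}_\beta \subseteq \mc{C}_\gamma$, which is the $X \subseteq Y$ needed by \cref{lem:XYc-to-XX}), verifying the size bound $|Y| \leq \alpha n \leq n/2$ holds so that \cref{lem:XYc-to-XX} applies, and handling the degenerate case $e(\mc{C}_\beta,\mc{C}_\beta)=0$ so that the combination of a strict and a non-strict inequality genuinely produces a contradiction rather than just $x \geq x$. None of these is hard; the heavy lifting has already been done in \cref{lem:XYc-to-XX} and \cref{lem:kernel-stability}.
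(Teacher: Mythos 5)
Your proof is correct and follows essentially the same route as the paper: assume $|\mc{C}_\gamma| \leq (1+\delta)|\mc{C}_\beta|$ and $|\mc{C}_\gamma| \leq \alpha n$, apply \cref{lem:kernel-stability} and \cref{lem:XYc-to-XX} with $X = \mc{C}_\beta$, $Y = \mc{C}_\gamma$, and contradict the hypothesis $\sin(\beta-\gamma) \geq 20\alpha/\eps$. Your extra discussion of the degenerate case $e(\mc{C}_\beta,\mc{C}_\beta)=0$ is harmless but not needed, since the strict inequality from \cref{lem:XYc-to-XX}, multiplied by $\sin(\beta-\gamma)>0$, already yields $e(\mc{C}_\beta,\mc{C}_\beta) > e(\mc{C}_\beta,\mc{C}_\beta)$ in all cases.
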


    \begin{proof}
     Suppose for contradiction that $|\mc{C}_\gamma| \leq (1+\delta) |\mc{C}_\beta|$ and $|\mc{C}_\gamma| \leq \alpha n$ (and so $|\mc{C}_\beta| \leq \alpha n$).  Then, we have
        \[ e(\mc{C}_\beta,\mc{C}_\beta) \geq \sin(\beta - \gamma) e(\mc{C}_\beta,\mc{C}_\gamma^c) > \sin(\beta - \gamma) \frac{\eps}{20\alpha} e(\mc{C}_\beta,\mc{C}_\beta). \]
        This contradicts our assumption on $\sin(\beta - \gamma)$. The first inequality here is by \cref{lem:kernel-stability} and the second is by \cref{lem:XYc-to-XX}.
    \end{proof}

\begin{lemma}\label{lem:ratio-large}
Let $G$ be as in \cref{thm:main} and recall that $\delta = (\eps d)/(d_\mr{max} - 2 \varepsilon d)$. Let $\theta$ be a stable state on $G$ with $\rho_1(\theta) \in [0,1]$.

If the angles $0 < \gamma < \beta \leq \pi/2$ satisfy
\begin{equation}\label{eq:sin-max-2} \sin(\beta-\gamma) \geq \frac{d_\mr{max} + 1}{\delta} \cdot \frac{|\mc{C}_{\pi/2}|}{|\mc{C}_\beta|} \end{equation}
then
\[ |\mc{C}_\gamma| > \min\left\{ \left( 1 + \delta \right) |\mc{C}_\beta|, n/2 \right\}. \]
\end{lemma}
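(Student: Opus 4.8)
The plan is to argue by contradiction, assuming that $|\mc{C}_\gamma| \leq (1+\delta)|\mc{C}_\beta|$ and $|\mc{C}_\gamma| \leq n/2$ (so in particular $|\mc{C}_\beta| \leq n/2$ as well). Writing $X = \mc{C}_\beta$ and $Y = \mc{C}_\gamma$, these assumptions place us exactly in the regime where we can try to play off \cref{lem:kernel-stability} against the expansion bounds of \cref{sec:expansion-on-G}. From \cref{lem:kernel-stability} we have the chain $e(\mc{C}_{\pi/2}, \mc{C}_\beta) \geq \sin(\beta-\gamma)\, e(\mc{C}_\beta, \mc{C}_\gamma^c)$, and from \cref{lem:XV-bound} we have $e(\mc{C}_{\pi/2}, \mc{C}_\beta) \leq e(\mc{C}_{\pi/2}, V) \leq (d_\mr{max}+1)|\mc{C}_{\pi/2}|$. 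So if we can lower-bound $e(\mc{C}_\beta, \mc{C}_\gamma^c)$ by something like $\delta |\mc{C}_\beta|$, combining these three inequalities gives
\[ (d_\mr{max}+1)|\mc{C}_{\pi/2}| \geq \sin(\beta-\gamma)\, \delta\, |\mc{C}_\beta|, \]
which rearranges to contradict the hypothesis \eqref{eq:sin-max-2}. So the whole game reduces to establishing $e(\mc{C}_\beta, \mc{C}_\gamma^c) \geq \delta |\mc{C}_\beta|$ (or something close) under the contradiction hypotheses.

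The natural tool for this is \cref{lem:XY-bound-original}/\cref{lem:XYc-to-XX}, but as the discussion in \cref{sec:amplification} warns, those only apply when $|X| \leq \alpha n$ — and here $X = \mc{C}_\beta$ could be as large as $n/2$. This is exactly why the proof must split into cases. \textbf{Case 1:} if $|\mc{C}_\beta| \leq \alpha n$, then since $|\mc{C}_\gamma| \leq (1+\delta)|\mc{C}_\beta|$ and $|\mc{C}_\gamma| \leq n/2$, we can apply \cref{lem:XY-bound-original} directly with $X = \mc{C}_\beta$, $Y = \mc{C}_\gamma$ (after restricting to $W$ and using the defect hypotheses as in the proof of \cref{lem:XYc-to-XX}) to get $e(\mc{C}_\beta, \mc{C}_\gamma^c) \gtrsim \eps \frac{d}{n}|\mc{C}_\beta||\mc{C}_\gamma^c| \geq \eps \frac d n |\mc{C}_\beta| \cdot \frac n 2 \gtrsim \eps d |\mc{C}_\beta|$, which is comfortably at least $\delta |\mc{C}_\beta|$ since $\delta = \eps d/(d_\mr{max} - 2\eps d) \leq \eps$ when $d_\mr{max} \geq 3\eps d$ (and this should follow from \eqref{eq:condition-for-d}). \textbf{Case 2:} if $|\mc{C}_\beta| > \alpha n$, then the expansion lemmas are unavailable, and here we fall back on plain connectivity of $G$: since $\mc{C}_\beta \neq \emptyset$ (it contains $\mc{C}_{\pi/2}$, which is nonempty by \cref{lem:half-circle}) and $\mc{C}_\gamma \subsetneq V$ (as $|\mc{C}_\gamma| \leq n/2 < n$), connectivity forces at least one edge leaving $\mc{C}_\gamma$, hence $e(\mc{C}_\beta, \mc{C}_\gamma^c) \geq 1$; but more usefully, I want $e(\mc{C}_\beta, \mc{C}_\gamma^c) \geq \delta|\mc{C}_\beta|$, and since $|\mc{C}_\beta| > \alpha n$ whereas $\delta|\mc{C}_\beta| $ need only beat... — hmm, this needs care: a single edge is not $\delta|\mc{C}_\beta|$ when $|\mc{C}_\beta|$ is linear in $n$. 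So in Case 2 one should instead observe that the right-hand side of \eqref{eq:sin-max-2} is tiny: $|\mc{C}_{\pi/2}|/|\mc{C}_\beta| < |\mc{C}_{\pi/2}|/(\alpha n) \leq 1/(\alpha n)$ times... no — the cleaner route is that when $|\mc{C}_\beta|$ is large, the contradiction target becomes easy because $e(\mc{C}_\beta, \mc{C}_\gamma^c) \geq 1$ already exceeds $\sin(\beta-\gamma)^{-1}(d_\mr{max}+1)|\mc{C}_{\pi/2}|/|\mc{C}_\beta| \cdot \delta|\mc{C}_\beta|$... Let me restate: by \cref{lem:kernel-stability} and \cref{lem:XV-bound}, $\sin(\beta-\gamma) e(\mc{C}_\beta,\mc{C}_\gamma^c) \leq (d_\mr{max}+1)|\mc{C}_{\pi/2}| \leq (d_\mr{max}+1)|\mc{C}_\beta|$, so $e(\mc{C}_\beta, \mc{C}_\gamma^c) \leq \sin(\beta-\gamma)^{-1}(d_\mr{max}+1)|\mc{C}_\beta|$; this is not yet a contradiction. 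Instead, in Case~2 use that $e(\mc{C}_\beta, \mc{C}_\gamma^c) \geq 1$ by connectivity, plug into \cref{lem:kernel-stability} to get $e(\mc{C}_{\pi/2},\mc{C}_\beta) \geq \sin(\beta-\gamma) \geq (d_\mr{max}+1)|\mc{C}_{\pi/2}|/(\delta|\mc{C}_\beta|)$ by \eqref{eq:sin-max-2}, i.e. $\delta|\mc{C}_\beta| e(\mc{C}_{\pi/2}, \mc{C}_\beta) \geq (d_\mr{max}+1)|\mc{C}_{\pi/2}|$; since $e(\mc{C}_{\pi/2},\mc{C}_\beta) \leq (d_\mr{max}+1)|\mc{C}_{\pi/2}|$ by \cref{lem:XV-bound}, this forces $\delta|\mc{C}_\beta| \geq 1$, which is true and gives nothing — so the genuinely correct split must be: in Case~2 ($|\mc{C}_\beta| > \alpha n$), apply \cref{lem:XYc-to-XX}-style reasoning is impossible, but note $|\mc{C}_\gamma^c| \geq n/2$ still, so one instead bounds $e(\mc{C}_\beta, \mc{C}_\gamma^c) \geq e(W \setminus \mc{C}_\gamma, \mc{C}_\beta \cap W)$-type quantities via \cref{lem:XY-bound-original} applied with the roles suitably chosen, or simply uses that $\mc{C}_\gamma \subseteq \mc{C}_\beta$-complement structure. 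I will flesh this out by taking $X$ to be a subset of $\mc{C}_\beta$ of size exactly $\lceil \alpha n \rceil$ (or $\lceil |\mc{C}_\beta|/(1+\delta)\rceil$) to bring us back into the applicable range of \cref{lem:XY-bound-original}.

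The main obstacle, as the case analysis above shows, is precisely handling the regime $|\mc{C}_\beta| > \alpha n$ where the spectral expansion estimates give out: there one genuinely needs to invoke connectivity of $G$ rather than quantitative expansion, and the bookkeeping to convert ``one crossing edge'' into the required multiplicative growth $|\mc{C}_\gamma| > (1+\delta)|\mc{C}_\beta|$ is delicate — this is the crux that distinguishes \cref{lem:ratio-large} from \cite[Lemma~4.4]{abdalla2022expander} and forces the logarithmic gap $\beta - \gamma$. I expect the clean way to organize it is: first dispose of the case $|\mc{C}_\gamma| \geq n/2$ (done, nothing to prove) and $|\mc{C}_\beta| \geq \alpha n$ (passing to a subset of controlled size and applying \cref{lem:XY-bound-original}), and then in the remaining case $|\mc{C}_\beta| < \alpha n$, run the contradiction argument with \cref{lem:kernel-stability}, \cref{lem:XV-bound}, and \cref{lem:XYc-to-XX} as sketched in the first paragraph, checking at the end that the constants coming from \eqref{eq:condition-for-d} make $\delta \leq \eps$ so the final inequality closes.
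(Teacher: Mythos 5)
Your overall skeleton is the same as the paper's: assume for contradiction that $|\mc{C}_\gamma| \le (1+\delta)|\mc{C}_\beta|$ and $|\mc{C}_\gamma| \le n/2$, combine $e(\mc{C}_{\pi/2},\mc{C}_\beta) \le (d_\mr{max}+1)|\mc{C}_{\pi/2}|$ from \cref{lem:XV-bound} with \cref{lem:kernel-stability}, and contradict \cref{eq:sin-max-2} by proving a lower bound of the shape $e(\mc{C}_\beta,\mc{C}_\gamma^c) > \delta|\mc{C}_\beta|$. The gap is in how you try to produce that lower bound. Your case split on $|\mc{C}_\beta| \le \alpha n$ versus $|\mc{C}_\beta| > \alpha n$ rests on a misreading: \cref{lem:XY-bound-original} has no hypothesis of the form $|X| \le \alpha n$ (only \cref{lem:XYc-to-XX} does), so the large-$|\mc{C}_\beta|$ regime is not where the difficulty lies, and in any case your treatment of it is left unfinished (the connectivity detour is, as you note yourself, vacuous, and the ``pass to a subset of size $\lceil \alpha n\rceil$'' idea is never carried out). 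More seriously, your Case 1 bound $e(\mc{C}_\beta,\mc{C}_\gamma^c) \gtrsim \eps d\,|\mc{C}_\beta|$ is simply false in the regime this lemma exists to handle: $\mc{C}_\beta$ may be largely, or entirely, contained in the defect set $B$, whose vertices can have degree $1$; then $\mc{C}_\beta\cap W$ may be empty and no bound proportional to $\eps d|\mc{C}_\beta|$ can hold (this is exactly the failure mode flagged in \cref{sec:amplification}). Appealing to ``the defect hypotheses as in the proof of \cref{lem:XYc-to-XX}'' does not repair this: that lemma's conclusion is relative to $e(\mc{C}_\beta,\mc{C}_\beta)$, which can itself vanish when $\mc{C}_\beta\subseteq B$ (by \cref{item:B-empty}), and applying \cref{lem:XY-bound-original} inside $W$ also needs $|\mc{C}_\gamma\cap W|\le(1+\delta)|\mc{C}_\beta\cap W|$, which can fail when a large share of $\mc{C}_\beta$ sits in $B$. (A smaller slip: $d_\mr{max}\ge 3\eps d$ only gives $\delta\le 1$, not $\delta\le\eps$; the paper closes the corresponding constant comparison via \cref{eq:condition-for-d}.)

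The paper's proof instead splits on the size of $\mc{C}_\beta\cap B$. If $|\mc{C}_\beta\cap B|\le(\tfrac12+\delta)|\mc{C}_\beta|$, it runs essentially your Case 1 inside $W$, with $|\mc{C}_\beta\cap W|\ge(\tfrac12-\delta)|\mc{C}_\beta|$ and $|\mc{C}_\gamma^c\cap W|\ge(\tfrac12-\alpha)n$, and absorbs the loss using \cref{eq:condition-for-d}. If $|\mc{C}_\beta\cap B|>(\tfrac12+\delta)|\mc{C}_\beta|$, it sets $S=\mc{C}_\beta\cap B$ and uses minimum degree $\ge 1$ together with \cref{item:B-empty} and \cref{item:no-2-neighbors} (no edges inside $B$, and every vertex has at most one neighbor in $B$) to get
\[ e(S,\mc{C}_\gamma^c)\;\ge\; e(S,V)-e(S,\mc{C}_\gamma\setminus S)\;\ge\;|S|-|\mc{C}_\gamma\setminus S|\;>\;\delta|\mc{C}_\beta|, \]
which is the quantitative replacement for your ``one crossing edge from connectivity.'' Without an argument of this kind for the $B$-dominated regime, your proposal does not prove the lemma.
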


    \begin{proof}
        By \cref{lem:XV-bound}, we have
        \begin{equation}\label{eq:C_beta-case-2} e(\mc{C}_{\pi/2},\mc{C}_\beta) \leq e(\mc{C}_{\pi/2},V) \leq (d_\mr{max} + 1)|\mc{C}_{\pi/2}|. \end{equation}
        Moreover, by \cref{lem:kernel-stability}, we have $e(\mc{C}_{\pi/2}, C_\beta) \geq \sin(\beta - \gamma)e(\mc{C}_{\beta}, \mc{C}_{\gamma}^c)$.  Suppose for contradiction that $|\mc{C}_\gamma| \leq (1+\delta)|\mc{C}_\beta|$ and $|\mc{C}_\gamma| \leq n/2$. We will lower bound $e(\mc{C}_{\beta}, \mc{C}_{\gamma}^c)$ to derive a contradiction to \cref{eq:sin-max-2}. There are two cases.

        \textbf{Case 1:} $|\mc{C}_\beta \cap B| \leq (\frac12 + \delta)|\mc{C}_{\beta}|$.  Then, by \cref{lem:XY-bound-original},
        \[ e(\mc{C}_\beta,\mc{C}_\gamma^c) \geq e(\mc{C}_\beta \cap W, \mc{C}_\gamma^c \cap W) \geq \eps \frac dn |\mc{C}_\beta \cap W| |\mc{C}_\gamma^c \cap W| > \frac\eps4 d \paren{\frac12 - \delta} |C_\beta|,  \]
        using $|\mc{C}^c_\gamma \cap W| \geq |\mc{C}^c_\gamma| - |B| \geq (\frac12 - \alpha)n > \frac14n$ and $|\mc{C}_\beta \cap W| \geq (\frac12 - \delta)|\mc{C}_\beta|$.  By \cref{eq:C_beta-case-2} and \cref{lem:kernel-stability}, we have
        \[ \sin(\beta - \gamma) < \frac{4(d_\mr{max} + 1)}{(\frac12 - \delta) \eps d} \cdot \frac{|\mc{C}_{\pi/2}|}{|\mc{C}_\beta|} = \frac{d_\mr{max} + 1}{\delta} \cdot \frac{|\mc{C}_{\pi/2}|}{|\mc{C}_\beta|} \cdot \frac{8}{d_\mr{max} -  4\eps d}. \]

        The last step comes from recalling the definition of $\delta$ and algebraic manipulations.  By \cref{eq:condition-for-d}, we have $8/(d_{\mr{max}} - 4\eps d)\le 1$, so this contradicts \cref{eq:sin-max-2}.

        \textbf{Case 2:} $|\mc{C}_\beta \cap B| > (\frac12 + \delta)|\mc{C}_\beta|$. Let $S = \mc{C}_\beta \cap B$.  Since $S\subseteq \mc{C}_\beta \subseteq \mc{C}_\gamma$ and $|\mc{C}_\gamma| \leq (1+\delta)|\mc{C}_\beta|$ by assumption, it follows that $|\mc{C}_\gamma \setminus S| = |\mc{C}_\gamma| - |S| \leq |C_\beta|/2$.  Moreover, $G$ has minimum degree at least 1, and since $S \subset B$, no two vertices in $S$ have an edge between them or any common neighbors by \cref{item:B-empty} and \cref{item:no-2-neighbors}.  Thus
        \[ e(\mc{C}_\beta,\mc{C}_\gamma^c) \geq e(S,\mc{C}_\gamma^c) = e(S,V) - e(S, \mc{C}_\gamma) = e(S,V) - e(S, \mc{C}_{\gamma}\setminus S) \geq |S| -  |\mc{C}_\gamma \setminus S| > \delta |\mc{C}_\beta|. \]
 Putting this together with \cref{eq:C_beta-case-2} and \cref{lem:kernel-stability} gives
        \[ \sin(\beta - \gamma) < \frac{d_\mr{max} + 1}{\delta} \cdot \frac{|\mc{C}_{\pi/2}|}{|\mc{C}_\beta|}, \]
        contradicting \cref{eq:sin-max-2}.
    \end{proof}

\subsection{Global synchronization for \texorpdfstring{$G$}{G}}  We are now prepared to prove our main technical result.

    \begin{proof}[Proof of \cref{thm:main}]

        Let $k^* := \lfloor \eps/(40\alpha) \rfloor$ and $\beta_0 = \pi/2$.  For integers $0 < k \leq k^*$, let
        \[ \beta_k = \beta_{k-1} - \sin^{-1}\paren{\frac{20\alpha}{\eps}}. \]

       Note that if $k^* \geq 1$, then $40\alpha \leq \eps$, so this is well-defined. 
        Let $\beta^* = \beta_{k^*}$ and $\delta = (\eps d)/(d_\mr{max} - 2 \eps d)$. By iteratively applying \cref{lem:ratio-small}, we have $|\mc{C}_{\beta^*}| \geq \min\{ \alpha n, (1+\delta)^{k^*} |\mc{C}_{\pi/2}| \}$. There are two cases. 

        \textbf{Case 1:} If $|\mc{C}_{\beta^*}| \geq \alpha n$, then notice
        \begin{equation} \label{eq:sum-up-to-k^*} \frac\pi2 - \beta^* = \beta_0 - \beta_{k^*} = \sum_{k=0}^{k^*-1} (\beta_k - \beta_{k+1}) = k^* \sin^{-1}\paren{\frac{20\alpha}{\eps}} \leq \paren{\frac{\eps}{40\alpha}} \cdot \frac\pi2 \paren{\frac{20\alpha}{\eps}} = \frac\pi4. \end{equation}
Here we have used the numerical inequality $\sin^{-1}(x) \leq \pi x/2$ for $0 \leq x \leq 1$. Therefore, $\beta^* \geq \pi/4$ and thus $|C_{\pi/4}| \geq |C_{\beta^*}| \geq \alpha n$. Hence
        \[ |\mc{C}_{\pi/4}| \sin^2(\pi/4) \geq \frac{\alpha n}{2} > \frac{5\alpha^2 n}{2}. \]
        Since $\alpha < 1/5$ this contradicts \cref{lem:contradiction}.

        \textbf{Case 2:} If $\alpha n > |\mc{C}_{\beta^*}| \geq (1+\delta)^{k^*} |\mc{C}_{\pi/2}|$, we will turn to \cref{lem:ratio-large}.  For $k > k^*$, let
        \[ \beta_k = \beta_{k-1} - \sin^{-1}\paren{\frac{d_\mr{max} + 1}{\delta} \cdot \frac{|\mc{C}_{\pi/2}|}{|\mc{C}_{\beta_{k-1}}|}}. \]

        Since $|\mc{C}_{\pi/2}|/|\mc{C}_{\beta^*}| \leq (1+\delta)^{-k^*}$, it follows by \cref{eq:condition-for-k*} that the argument of $\sin^{-1}(\cdot)$ is at most $\delta/(8(1+\delta)) < 1$, so this is well-defined.  By \cref{lem:ratio-large}, for all $k > k^*$, we have $|\mc{C}_{\beta_{k+1}}| \geq \min\{(1+\delta)|\mc{C}_{\beta_k}|, n/2\}$.  Let $M = \min\{ k : |\mc{C}_{\beta_k}| \geq n/2 \}$.  Then
        \[ \frac\pi2 - \beta_M = \beta_0 - \beta_M = \sum_{k=0}^{k^*-1} (\beta_k - \beta_{k+1}) + \sum_{k=k^*}^{M-1} (\beta_k - \beta_{k+1}). \]

        The first term is bounded by \cref{eq:sum-up-to-k^*}.  We now focus on the second term.
        \begin{align*}
            \sum_{k=k^*}^{M-1} (\beta_k - \beta_{k+1}) & \leq \sum_{k=k^*}^{M-1} \sin^{-1}\paren{\frac{d_\mr{max} + 1}{\delta} \cdot \frac{|\mc{C}_{\pi/2}|}{|\mc{C}_{\beta_k}|}}
            \leq \frac\pi2 \cdot \frac{d_\mr{max} + 1}{\delta} \sum_{k=k^*}^{M-1} \frac{|\mc{C}_{\pi/2}|}{|\mc{C}_{\beta_k}|} \\
            & \leq \frac\pi2 \cdot \frac{d_\mr{max} + 1}{\delta} \sum_{k=k^*}^{M-1} (1+\delta)^{-k}
            \leq \frac\pi2 \cdot \frac{d_\mr{max} + 1}{\delta} \sum_{k=k^*}^\infty (1+\delta)^{-k} \\
            & = \frac\pi2 \cdot \frac{d_\mr{max} + 1}{\delta} \cdot \frac{1+\delta}{\delta(1+\delta)^{k^*}}.
            = \frac\pi2 \cdot \frac{(1+\delta)(d_\mr{max} + 1)}{\delta^2 (1+\delta)^{k^*}}.
        \end{align*}

        By \cref{eq:condition-for-k*}, the total fraction is at most $\pi/16$.  Thus $\beta_M \geq 3\pi/16$, so 
        \[ |\mc{C}_{\beta_M}| \sin^2(\beta_M) \geq \frac n2 \sin^2\paren{\frac{3\pi}{16}} > \frac{n}{10}, \]
        and since $\alpha < 1/5$, we have $n/10 > 5\alpha^2n/2$, contradicting \cref{lem:contradiction}.
    \end{proof}

\section{Proof of \texorpdfstring{\cref{thm:Gnm-synchronizing}}{Theorem}}\label{sec:hitting-time}

We now proceed to the proof of \cref{thm:Gnm-synchronizing}. The crucial decomposition to apply \cref{thm:main} stems from setting $B$ to be the set of low degree vertices; near the hitting time the neighborhoods or such vertices are with high probability disjoint.  

It will be convenient to view the random graph process by coupling them through a set of uniform random variables on $[0,1]$. Let $K_n$ be an $n$-vertex complete graph on $V$.  For each $e \in E(K_n)$, consider an independent random variable $\xi_e \sim \on{Unif}([0,1])$.  For $p \in [0,1]$, let
\[ G_p = (V,E_p)\text{ and } E_p = \{ e \in E(K_n) : \xi_e \leq p \}. \]
Note that $G_p \sim G(n,p)$. 
    
Let $g(n)$ be a function tending to infinity sufficient slowly. Define the three times
    \[ \sigma = \frac{\log n - g(n)}{n-1},~\omega = \frac{5\log n}{n-1}~\text{ and }\lambda = \min\{t \in [0,1] : G_t \text{ is connected} \}. \]
    The first two are deterministic whereas the third one is random, depending on the outcomes of $\{\xi_{e}\}_{e \in E(K_n)}$. 
    Note that almost surely $\xi_e \ne \xi_{e'}$ for any $e \ne e'$.  Thus we may define the random variable $t_m = \min\{p : |E(G_p)| \geq m\}$ and have $G_{t_m} \sim G(n,m)$ almost surely. In particular, $G_\lambda \sim G(n,\tau)$ for $\tau$ the stopping time defined in \cref{thm:Gnm-synchronizing}.

Fix $\eps > 0$ sufficiently small to be chosen later.  We now partition $V$ into
\begin{equation}\label{eq:def-of-B-W}
 B = \left\{ v \in V(G) : \deg_\sigma(v) \leq 11\eps\log n \right\}\text{ and }
W = V \setminus B.
\end{equation}

The following simple lemma, which is a slight modification of \cite[Lemma~10.2]{kahn2019asymptotics} or \cite[Lemma~5.1]{devlin2017perfect} shows that the set $B$ in the above decomposition satisfies the properties required to use \cref{thm:main}. 
\begin{lemma}\label{lem:defect-edges-well-behaved}
Let $B$ and $W$ be as above. Then with high probability we have:
        \begin{enumerate}
            \item $\lambda \in (\sigma,\omega)$
            \item $|B| < n^{22\eps \log(e/(11\eps))}$
            \item No edge in $G_\omega$ has both endpoints outside $W$
            \item No vertex in $W$ has more than one neighbor outside $W$ in $G_\omega$
        \end{enumerate}
    \end{lemma}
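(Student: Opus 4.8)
The plan is to verify each of the four claims separately, using standard first- and second-moment arguments for the Erd\H{o}s--R\'enyi process, with the coupling through the variables $\{\xi_e\}$ making it painless to compare graphs at different densities.

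\textbf{Claim (1): $\lambda \in (\sigma, \omega)$.} This is the classical connectivity threshold fact. For the lower bound $\lambda > \sigma$, note $G_\sigma \sim G(n, (\log n - g(n))/(n-1))$, which with high probability has an isolated vertex (a standard second-moment computation on the number of isolated vertices, whose expectation is $\sim e^{g(n)} \to \infty$), hence is disconnected, so $\lambda > \sigma$. For the upper bound $\lambda < \omega$, one invokes the classical result that $G(n,p)$ with $p = 5\log n/(n-1)$ is connected with high probability --- indeed, well above the threshold, so a union bound over potential small disconnected pieces suffices; alternatively cite that connectivity has a sharp threshold at $\log n /n$. The monotone coupling gives $G_\sigma \subseteq G_\omega$ so the two bounds are consistent.

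\textbf{Claims (2), (3), (4) via properties of $B$.} First I would understand $|B|$. A vertex $v$ lies in $B$ iff $\deg_\sigma(v) \le 11\eps \log n$; since $\deg_\sigma(v) \sim \mathrm{Bin}(n-1, \sigma)$ with mean $\approx \log n$, a Chernoff bound gives $\Pr[v \in B] \le n^{-1 + 11\eps \log(e/(11\eps)) + o(1)}$ (this is the usual large-deviation rate $\Pr[\mathrm{Bin}(n,\mu/n) \le a\mu] \le e^{-\mu(a \log a - a + 1)}$ with $\mu = \log n$, $a = 11\eps$). Hence $\mb{E}|B| \le n^{11\eps\log(e/(11\eps)) + o(1)}$, and Markov's inequality gives $|B| < n^{22\eps\log(e/(11\eps))}$ with high probability (the factor $2$ in the exponent absorbs the $o(1)$ and the Markov slack). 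This is claim (2). For claim (3): the expected number of edges of $G_\omega$ with both endpoints in $B$ is at most $\binom{n}{2}\omega \cdot \Pr[u,v \in B]$; the key point is that $u \in B$ and $v\in B$ are nearly independent (they share only the one potential edge $uv$, which lives in $G_\sigma$ and affects each degree by at most $1$, so conditioning changes the large-deviation bound by only a constant factor), giving expected count $\le n^2 \cdot (\log n/n) \cdot n^{-2 + 22\eps\log(e/(11\eps)) + o(1)} = n^{22\eps\log(e/(11\eps)) - 1 + o(1)} \to 0$ once $\eps$ is small enough that $22\eps \log(e/(11\eps)) < 1$. For claim (4): similarly bound the expected number of ``cherries'' --- a vertex $w$ with two neighbors $u, v$ in $G_\omega$, both of which lie in $B$ --- by $n \cdot \binom{n}{2} \cdot \omega^2 \cdot \Pr[u \in B]\Pr[v\in B] \le n^3 \cdot (\log n/n)^2 \cdot n^{-2 + o(1)+22\eps\log(e/(11\eps))}= n^{22\eps\log(e/(11\eps)) - 1 + o(1)} \to 0$, again using approximate independence of the events $\{u \in B\}, \{v \in B\}$ and that the edges $wu, wv$ (in $G_\omega$) are independent of the degrees in $G_\sigma$ up to a bounded factor. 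Union bound over all four claims finishes.

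\textbf{Main obstacle.} The only genuinely delicate point is handling the mild dependence between the events $\{u \in B\}$ and $\{v \in B\}$ (and between these and the edge-presence events in $G_\omega$ used in claims (3)--(4)): $\deg_\sigma(u)$ and $\deg_\sigma(v)$ both ``see'' the edge $uv$, and in (3)--(4) one is additionally conditioning on $uv \in E_\omega$ or on $wu, wv \in E_\omega$, which sit at density $\omega > \sigma$. The clean way around this is to condition on the status of the $O(1)$ shared/relevant edges and observe that each such conditioning perturbs a degree by at most $O(1)$, hence changes the relevant Chernoff bound by at most a constant factor --- these constants are harmless since we have a polynomial gain $n^{-1 + 22\eps\log(e/(11\eps))} \to 0$ to spare. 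I would also make sure to state explicitly the choice of $\eps$: any $\eps$ with $22\eps\log(e/(11\eps)) < 1$ works, and this is also the constraint needed for $|B| \le \alpha n$ in \cref{thm:main}, so it is consistent to fix it here. Everything else is a routine union bound, and the monotone coupling $G_\sigma \subseteq G_\lambda \subseteq G_\omega$ lets us transfer the structural facts about $G_\omega$ (claims 3, 4) and about $B$ (defined via $G_\sigma$) to the graph $G_\lambda \sim G(n,\tau)$ at the connectivity time.
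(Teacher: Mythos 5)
Your proposal is correct and follows essentially the same route as the paper: first-moment (Markov) bounds on $|B|$, on edges with both endpoints in $B$, and on cherries with both leaves in $B$, all driven by the binomial lower-tail rate $n^{-1+11\eps\log(e/(11\eps))+o(1)}$ for $\Pr[v\in B]$, together with citing the classical connectivity window for (1). The only cosmetic difference is in handling the mild dependence: the paper decouples the (decreasing) low-degree events from the (increasing) edge-presence events via Harris's inequality, whereas you condition on the $O(1)$ shared edges and absorb a constant factor --- both are valid and lead to the same estimates.
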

\begin{proof}
(1) is a standard fact in the theory of random graphs, see, e.g.~\cite[Lemma~5.1(a)]{devlin2017perfect}.

For (2), observe that the distribution of the degree $\on{deg}_{G_\sigma}(u)$ is $\on{Bin}(n-1, \sigma)$. Let $\gamma = 11\varepsilon \log (e/11\varepsilon)$. and observe that any $v \in V$,
\begin{align*}
\mb{P}[v \in B] &= \mb{P}[\on{Bin}(n-1, \sigma) \le 11\eps \cdot \log n]\le \sum_{t=0}^{11\eps \cdot \log n}\binom{n}{t}\cdot \sigma^t \cdot (1-\sigma)^{n-t}\\
&\le 2(1-\sigma)^n \cdot \sum_{t=0}^{11\eps \cdot \log n}\Big(\frac{en\sigma}{t}\Big)^t\le 2n^{-1}\cdot e^{g(n)} \cdot n^{\gamma}.
\end{align*}
Therefore, $\mb{E}[|B|] < n^{3\gamma/2}$, which implies (2) by Markov's inequality.

Additionally, observe that
\begin{align*}
\mb{E}[e(B,B)] &=\binom{n}{2} \cdot \mb{P}[\on{deg}_{\sigma}(u)\le 11\eps \log n \cap \on{deg}_{\sigma}(v)\le 11\eps \log n \cap (u,v)\in G_{\omega}]\\
&\le \binom{n}{2} \cdot \omega \cdot \mb{P}[\on{deg}_{\sigma}(u)\le 11\eps \log n \cap \on{deg}_{\sigma}(v)\le 11\eps \log n]\\
&\le \omega \cdot (n \cdot \mb{P}[\on{Bin}(n-2, \sigma) \le 11\eps \cdot \log n])^2\le n^{-1/2}.
\end{align*}
Here we have used linearity of expectation, Harris's inequality, and then essentially repeated the estimate for $|B|$. The condition (3) then follows via Markov's inequality. 

Conclusion (4) follows similarly, using that
\begin{align*}
\mb{E}[\#\{ (b, b', v) : b \in B, \ b' \in B, &\ vb \in E_\omega, \ vb' \in E_\omega \} ]\\
&\le n^2 \cdot \omega^2 \cdot \mb{P}[\mb{P}[\on{deg}_{\sigma}(u)\le 11\eps \log n \cap \on{deg}_{\sigma}(v)\le 11\eps \log n] \le n^{-1/2}.
\end{align*}
The desired result then follows via Markov's inequality. \qedhere
  \end{proof}

    \begin{proposition}\label{prop:Gnp-nice}
        Fix $\eps,B,W$ as in \cref{eq:def-of-B-W}.  With high probability, the following holds simultaneously for all $p \in (\sigma,\omega)$.  The graph $G_p$ is an $(n,d,\alpha)$-expander for $d = pn$ and $\alpha = 20(\log n)^{-1/2}$.  The maximum degree of $G_p$ is at most $20 \log n$.  The minimum degree of $G_p[W]$ is at least $10(\eps + \alpha) \log n$.
    \end{proposition}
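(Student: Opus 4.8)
The plan is to verify the three assertions in turn, working throughout on the high-probability event of \cref{lem:defect-edges-well-behaved} — adjoining only a handful of further high-probability events, so the intersection is still typical — and systematically using the monotone coupling $G_\sigma \subseteq G_p \subseteq G_\omega$, valid for every $p \in (\sigma,\omega)$, to reduce each ``for all $p$'' claim to a single graph (or to a $\mr{poly}(n)$-size family).

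\emph{The two degree bounds are quick.} Since $G_p \subseteq G_\omega$, the maximum-degree bound follows once $\Delta(G_\omega) \le 20\log n$: as $\deg_\omega(v) \sim \on{Bin}(n-1,\omega)$ has mean at most $5\log n$, a Chernoff bound gives $\mb{P}[\deg_\omega(v) \ge 20\log n] \le n^{-3}$ for $n$ large, and a union bound over $v$ closes it. I note for later use that this also yields $\Delta(G_p) = O(np)$ for every $p$ in the window — i.e.\ $G_p$ has no abnormally heavy vertices. For the minimum degree of $G_p[W]$: if $v \in W$ then $\deg_\sigma(v) > 11\eps\log n$ by \cref{eq:def-of-B-W}, and since $N_{G_\sigma}(v) \subseteq N_{G_p}(v) \subseteq N_{G_\omega}(v)$,
\[ \deg_{G_p[W]}(v) = \bigl|N_{G_p}(v) \cap W\bigr| \ \ge\ \deg_\sigma(v) - \bigl|N_{G_\sigma}(v) \cap B\bigr| \ \ge\ \deg_\sigma(v) - \bigl|N_{G_\omega}(v) \cap B\bigr| \ >\ 11\eps\log n - 1 , \]
where the last step is part (4) of \cref{lem:defect-edges-well-behaved}; since $\alpha = 20(\log n)^{-1/2}$, for $n$ large this exceeds $10\eps\log n + 200\sqrt{\log n} = 10(\eps+\alpha)\log n$, as needed.

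\emph{The spectral expansion is the substantive step.} With $d = pn$ one has $\tfrac dn J = pJ$ and $\mb{E}[A_{G_p}] = p(J-I)$, so $\bigl\|A_{G_p} - \tfrac dn J\bigr\| \le \bigl\|A_{G_p} - p(J-I)\bigr\| + p$. I would then invoke a sharp spectral concentration estimate for sparse Erd\H{o}s--R\'enyi graphs in the regime $np \gtrsim \log n$ — applicable here precisely because $G_p$ has maximum degree $O(np)$, so no truncation of heavy vertices is needed — to conclude $\bigl\|A_{G_p} - p(J-I)\bigr\| \le C_0\sqrt{np}$ with high probability, for an absolute constant $C_0$ (the discrepancy / trace-method argument of Feige--Ofek, or the bounds of Vu and of Lei--Rinaldo, give such a statement, with $C_0$ close to $2$). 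Since $np \ge \log n - g(n) = (1-o(1))\log n$ for $p \in (\sigma,\omega)$, we get $\alpha d / \sqrt{np} = 20\sqrt{np/\log n} \ge 19$ for $n$ large, whence $\alpha d \ge 19\sqrt{np} > C_0\sqrt{np} + p$, so $G_p$ is an $(n,d,\alpha)$-expander.

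It remains to make the spectral bound hold for all $p \in (\sigma,\omega)$ simultaneously; this is a standard uniformity point, which I would handle by observing that $G_p$ is piecewise constant in $p$ with only $O(n\log n)$ breakpoints $\xi_e$ in the window (with high probability), that $p \mapsto \|A_H - pJ\|$ is convex on each piece and hence extremized at a breakpoint, and then union-bounding — or, alternatively, by invoking a form of the Feige--Ofek discrepancy estimate that is uniform in $p$, controlled through a single union bound over vertex subsets. \textbf{The main obstacle} is the spectral concentration input itself: at $np = \Theta(\log n)$ one genuinely needs the sharp rate $O(\sqrt{np})$, not the $O(\sqrt{np\log n})$ that elementary tools such as matrix Bernstein produce, and with a small enough implied constant — this is exactly the delicate sparse regime where the Feige--Ofek-type arguments (and the bounded-degree fact established above) are required. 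Fortunately the deliberately generous choice $\alpha = 20(\log n)^{-1/2}$ leaves roughly an order of magnitude of multiplicative slack in the constant, so any of the sharp bounds suffices comfortably.
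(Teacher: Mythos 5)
Your proposal is correct and follows essentially the same route as the paper: monotonicity plus a Chernoff bound for the maximum degree, the definition of $B$ together with part (4) of \cref{lem:defect-edges-well-behaved} for the minimum degree of $G_p[W]$, and a fixed-$p$ sharp spectral concentration bound combined with a union bound over the $O(n\log n)$ distinct graphs in the window for the expander property. The only difference is cosmetic: the paper imports the spectral estimate from \cite[Corollary 6.6]{abdalla2022expander} (which supplies exactly the $20\sqrt{\log n}$ bound with the needed constant), whereas you invoke general Feige--Ofek/Lei--Rinaldo-type bounds and argue there is enough slack in the constant.
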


    Note that $d$ is the only parameter which changes with $p$; all others are fixed throughout the range.

    \begin{proof}
        Due to monotonicity, it suffices to check the minimum degree condition only at $G_\sigma[W]$. For all $v \in W$, by definition, $\deg_{G_\sigma}(v) \geq 11 \eps \log n$, and by \cref{lem:defect-edges-well-behaved}, $\deg_{G_\sigma[W]}(v) \geq \deg_{G_\sigma}(v) - 1$.  As $\alpha = o(1)$ and $\eps>0$ is a constant, we get
        \[ \deg_{G_\sigma[W]}(v) \geq 11\eps \log n - 1 > 10(\eps+\alpha)\log n \]
        for sufficiently large $n$.

        Similarly, by monotonicity, it suffices to check the maximum degree condition only for $G_\omega$. We will use a standard Chernoff bound for the binomial distribution: let $X_1,\dots,X_n$ be independent random variables taking values in $\{0,1\}$ and let $S_n = X_1 + \dots + X_n$. Then, for any $\delta > 0$
        \[\mb{P}[X \geq (1+\delta)\mb{E}[X]] \leq \exp(-\delta^2 \mb{E}[X]/(2+\delta))\]
        and for any $0 < \delta < 1$,
        \[\mb{P}[X \leq (1-\delta)\mb{E}[X]] \leq \exp(-\delta^2 \mb{E}[X]/2).\]
        
        For any $v \in V$, $\on{deg}_{G_\omega}(v)$ is distributed as $\on{Bin}(n-1, \omega)$, which has mean $5\log{n}$. Therefore, by the union bound, followed by the above Chernoff bound, 
        \begin{align*}
        \mb{P}[\exists v : \deg_{G_\omega} v \geq 20 \log n] \leq n \mb{P}[\deg_{G_\omega} v \geq 20 \log n] \leq n \cdot \exp(-9 \log{n}) = n^{-8}. 
        \end{align*}

        Thus we need only check that simultaneously for all $p \in (\sigma, \omega)$, $G_p$ is an $(n,d,\alpha)$-expander for $d=pn$ and $\alpha = 20(\log n)^{-1/2}$.   By \cite[Corollary 6.6]{abdalla2022expander}, we get $\mb{P}[\|A_{G_p} - d\,J_n/n\| \geq 20 \sqrt{\log n}] < 2n^{-3}$ for any fixed $p \in (\sigma,\omega)$.  By another Chernoff bound, with high probability, there are at most $10 n \log n$ edges with $\xi_e \in (\sigma,\omega)$, and so at most $10 n \log n$ distinct graphs $G_p$ for $p \in (\sigma,\omega)$.  Thus by a union bound, with high probability, all graphs $G_p$ with $p \in (\sigma,\omega)$ are $(n,d,\alpha)$-expanders for $d = pn$ and $\alpha = 20 (\log n)^{-1/2}$.
    \end{proof}

    \begin{proof}[Proof of \cref{thm:Gnm-synchronizing}]
        Suppose $p \in [\lambda,\omega)$.  Fix a sufficiently small constant $\eps > 0$ and let $B,W$ be as in \cref{eq:def-of-B-W}.  Let $d = pn$, $\alpha = 20 \log n$, $d_\mr{max} = 20 \log n$, and $\ell = 2(\eps + \alpha)d$.  By \cref{prop:Gnp-nice}, $G_p$ is a $(n,d,\alpha)$-expander; moreover, the maximum degree of $G_p[W]$ is at most $d_\mr{max}$ and minimum degree of $G_p[W]$ is at least $\ell$.  By \cref{lem:defect-edges-well-behaved}, $B$ satisfies \cref{item:B-small,item:B-empty,item:no-2-neighbors}, and by \cref{prop:Gnp-nice}, $B$ satisfies \cref{item:B-sparse}.  As $\eps = \Theta(1) = \delta$, $d_\mr{max} = \Theta(\log n) = d$, and $\alpha = \Theta((\log n)^{-1/2})$,
        \[ \frac{8(1+\delta)(d_\mr{max} + 1)}{\delta^2 \exp(\lfloor\frac{\eps}{40\alpha}\rfloor \log(1+\delta))} = \frac{\Theta(\log n)}{\exp(\Theta((\log n)^{1/2}))} = o(1), \]
        \[ \frac{8}{d_\mr{max} - 4 \eps d} = \frac{8}{\Theta(\log n)} = o(1), \]
        verifying \cref{eq:condition-for-k*,eq:condition-for-d} for sufficiently large $n$.  Since $p \geq \lambda$, there are no isolated vertices.  Thus by \cref{thm:main}, $G_p$ is globally synchronizing simultaneously for all $p \in [\lambda,\omega)$ with high probability.

        The regime $p \geq \omega$ is easier and follows essentially from \cite[Proposition~6.9]{abdalla2022expander}; we sketch the necessary modifications for completeness. Let $d = pn$ as before. By \cite[Corollary 6.6]{abdalla2022expander}, $\alpha \leq 10 d^{-1/2}$ with probability at least $1 - 2n^{-3}$ for $p \geq \omega$.  By a Chernoff bound as above, for $p \geq \omega$, we have $\mb{P}[d_\mr{max}(G_p) > 20d] < n^{-8}$ and $\mb{P}[d_\mr{min}(G_p) < 0.1 d] < \exp(-(0.9)^2\cdot 5\log n/2) = n^{-2.025}$.  Thus by \cite[Proposition 3.2 and Theorem 1.10]{abdalla2022expander}, for any $p \geq \omega$, $G_p$ is globally synchronizing with probability $1 - 2n^{-2.025}$.  Since there are at most $\binom n2$ distinct graphs in the evolution of $G_p$, by a union bound, with probability $1-n^{-0.025}$, all graphs $G_p$ for $p \geq \omega$ are globally synchronizing. \qedhere
        \end{proof}




    \bibliographystyle{amsplain0.bst}
    \bibliography{main}



\end{document}